\newtheorem{thm}{Theorem}[section]
\newtheorem{prop}[thm]{Proposition}
\newtheorem{lemma}[thm]{Lemma}
\theoremstyle{definition}
\newtheorem{rem}[thm]{Remark}
\newtheorem*{ack}{Acknowledgment}
\newtheorem{condition}[thm]{Condition}
\newcommand{\sq}{\hfill $\square$}
\newcommand{\kah}{K\"{a}hler}
\newcommand{\dol}{\sqrt{-1}\partial \overline{\partial}}
\newcommand{\ma}{Monge-Amp$\grave{{\rm e}}$re}
\def\address#1#2{\begingroup
\noindent\parbox[t]{16cm}{%
\small{\scshape\ignorespaces#1}\par\vskip1ex
\noindent\small{\itshape E-mail address}%
\/: #2\par\vskip4ex}\hfill%
\endgroup}%
\title{Complete scalar-flat K\"{a}hler metrics on affine algebraic manifolds}
\author{Takahiro Aoi}
\date{}
\begin{document}
\maketitle

\footnote{ 
2010 \textit{Mathematics Subject Classification}.
Primary 53C25; Secondary 32Q15, 53C21.
}
\footnote{ 
\textit{Key words and phrases}. 
 constant scalar curvature {\kah} metrics, complex Monge-Amp$\grave{{\rm e}}$re equations, plurisubharmonic functions, asymptotically conical geometry, Fredholm operators, {\kah} manifolds.
}

%
%









\maketitle

\begin{abstract}
Let $(X,L_{X})$ be an $n$-dimensional polarized manifold.\
Let $D$ be a smooth hypersurface defined by a holomorphic section of $L_{X}$.\
We prove that if $D$ has a constant positive scalar curvature K\"{a}hler metric, $X \setminus D$ admits a complete scalar-flat K\"{a}hler metric, under the following three conditions: (i) $n \geq 6$ and there is no nonzero holomorphic vector field on $X$ vanishing on $D$, (ii) an average of a scalar curvature on $D$ denoted by $\hat{S}_{D}$ satisfies the inequality $0 < 3 \hat{S}_{D} < n(n-1)$, (iii) there are positive integers $l(>n),m$ such that the line bundle $K_{X}^{-l} \otimes L_{X}^{m}$ is very ample and the ratio $m/l$ is sufficiently small.\

\end{abstract}

\setcounter{tocdepth}{2}
\tableofcontents

\section{
Introduction}
\label{intro}
The existence of constant scalar curvature {\kah} (cscK) metrics on complex manifolds is a fundamental problem in {\kah} geometry.\
If a complex manifold is noncompact, there are many positive results in this problem.\
In 1979, Calabi \cite{Ca} showed that if a Fano manifold has a {\kah} Einstein metric, then there is a complete Ricci-flat {\kah} metric on the total space of the canonical line bundle.\
In addition, there exist following generalizations.\
In 1990, Bando-Kobayashi \cite{BK} showed that if a Fano manifold admits an anti-canonical smooth divisor which has a Ricci-positive {\kah} Einstein metric, then there exists a complete Ricci-flat {\kah} metric on the complement (see also \cite{TY}).\
Tian-Yau \cite{TY1} showed that if a Fano manifold admits an anti-canonical smooth divisor which has a Ricci-flat {\kah} metric, then there is a complete Ricci-flat {\kah} metric on the complement.\
In 2002, on the other hand, as a scalar curvature version of Calabi's result \cite{Ca}, Hwang-Singer \cite{HS} showed that if a polarized manifold has a nonnegative cscK metric, then the total space of the dual line bundle admits a complete scalar-flat {\kah} metric.\
However, a similar generalization of Hwang-Singer \cite{HS} like Bando-Kobayashi \cite{BK} and Tian-Yau \cite{TY1} is unknown since it is hard to solve a forth order nonlinear partial differential equation.\

In this paper, assuming the existence of a smooth hypersurface which admits a constant positive scalar curvature {\kah} metric, we will prove the existence of a complete scalar-flat {\kah} metric on the complement of this hypersurface by using the results in \cite{Aoi1} and \cite{Aoi2}.\
Our proof goes roughly as follows.\\
{\it Step} 1. We show that if the smooth hypersurface has a cscK metric, there is a complete {\kah} metric whose scalar curvature decays at a higher order.\\
{\it Step} 2. We show that the existence of a complete {\kah} metric whose scalar curvature is sufficiently small implies the existence of a complete scalar-flat {\kah} metric.\\
{\it Step} 3. We construct a complete {\kah} metric on the complement of the smooth hypersurface, whose scalar curvature is arbitrarily small.\\
{\it Step} 4. Finally, we show the existence of a complete scalar-flat {\kah} metric by solving the forth order nonlinear partial differential equation.\

Now we describe our strategy which contains the results in the previous papers \cite{Aoi1} and \cite{Aoi2} more precisely.\
Let $(X,L_{X})$ be a polarized manifold of dimension $n$, i.e., $X$ is an $n$-dimensional compact complex manifold and $L_{X}$ is an ample line bundle over $X$.\
Assume that there is a smooth hypersurface $D \subset X$ with
$$
D \in |L_{X}|.
$$
Set an ample line bundle $L_{D} := \mathscr{O}(D)|_{D} = L_{X}|_{D}$ over $D$.\
Since $L_{X}$ is ample, there exists a Hermitian metric $h_{X}$ on $L_{X}$ which defines a {\kah} metric $\theta_{X}$ on $X$, i.e., the curvature form of $h_{X}$ multiplied by $\sqrt{-1}$ is positive definite.\
Then, the restriction of $h_{X}$ to $L_{D}$ defines also a {\kah} metric $\theta_{D}$ on $D$.\ 
Let $\hat{S}_{D}$ be the average of the scalar curvature $S(\theta_{D})$ of $\theta_{D}$ defined by
$$
\hat{S}_{D} := \frac{\displaystyle\int_{D} S(\theta_{D}) \theta_{D}^{n-1}}{\displaystyle\int_{D} \theta_{D}^{n-1}} = \frac{(n-1) c_{1}(K_{D}^{-1}) \cup c_{1}(L_{D})^{n-2}}{c_{1}(L_{D})^{n-1}},
$$
where $K_{D}^{-1}$ is the anti-canonical line bundle of $D$.\
Note that $\hat{S}_{D}$ is a topological invariant in the sense that it is representable in terms of Chern classes of the line bundles $K_{D}^{-1}$ and $L_{D}$.\
In this paper, we treat the following case :
\begin{equation}
\label{positivity 1}
\hat{S}_{D} > 0.
\end{equation}
Let $\sigma_{D} \in H^{0}(X,L_{X})$ be a defining section of $D$ and set $t := \log || \sigma_{D} ||_{h_{X}}^{-2}$.\
Following \cite{BK}, we can define a complete {\kah} metric $\omega_{0}$ by
$$
\omega_{0}:= \frac{n(n-1)}{\hat{S}_{D}}\dol \exp \left(\frac{\hat{S}_{D}}{n(n-1)}t\right)\\
$$
on the noncompact complex manifold $X \setminus D$.\
In addition, since $(X \setminus D, \omega_{0})$ is of asymptotically conical geometry (see \cite{BK} or Section 4 of \cite{Aoi1}), we can define weighted Banach spaces $C^{k,\alpha}_{\delta} = C^{k,\alpha}_{\delta}(X \setminus D)$ for $k \in \mathbb{Z}_{\geq 0}, \alpha \in (0,1)$ and with a weight $\delta \in \mathbb{R}$ with respect to the distance function $r$ defined by $\omega_{0}$ from some fixed point in $X \setminus D$.\
It follows from the construction of $\omega_{0}$ that $S(\omega_{0}) = O(r^{-2})$ near $D$.\

\medskip
{\it Step} 1. The cscK condition implies the following stronger decay property (see \cite{Aoi1}).\

\begin{thm}
\label{scalar curvature decay}
If $\theta_{D}$ is a constant positive scalar curvature {\kah} metric on $D$, i.e., $S(\theta_{D}) = \hat{S}_{D} > 0,$
we have
$$
S(\omega_{0}) = O(r^{-2 -2n(n-1)/\hat{S}_{D}})
$$
as $r \to \infty$.
\end{thm}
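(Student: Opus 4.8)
The plan is to localize near $D$ and expand $S(\omega_{0})$ order by order in the defining function $s:=\|\sigma_{D}\|_{h_{X}}^{2}=e^{-t}$. Write $a:=\hat{S}_{D}/(n(n-1))$. From the asymptotically conical description of $(X\setminus D,\omega_{0})$ recalled in \cite{BK} and Section 4 of \cite{Aoi1} one has $r^{2}\asymp e^{at}=s^{-a}$ near $D$, so that $r^{-2-2n(n-1)/\hat{S}_{D}}=r^{-2-2/a}\asymp e^{-at}s$, and the asserted estimate is equivalent to
\[
e^{at}S(\omega_{0})=O(s)\qquad(t\to\infty).
\]

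First I would rewrite $\omega_{0}$. On $X\setminus D$ one has $\sqrt{-1}\partial\bar\partial t=\theta_{X}$ (Poincar\'e--Lelong, since $t=\log\|\sigma_{D}\|_{h_{X}}^{-2}$ and $\sigma_{D}$ is nonvanishing there), hence $\omega_{0}=e^{at}(\theta_{X}+a\sqrt{-1}\partial t\wedge\bar\partial t)$ and $\omega_{0}^{\,n}=e^{nat}(1+a|\partial t|_{\theta_{X}}^{2})\,\theta_{X}^{\,n}$, so that
\[
\mathrm{Ric}(\omega_{0})=\mathrm{Ric}(\theta_{X})-na\,\theta_{X}-\sqrt{-1}\partial\bar\partial\log(1+a|\partial t|_{\theta_{X}}^{2}).
\]
The decisive observation is that $\mathrm{Ric}(\omega_{0})$ extends smoothly across $D$: the function $P:=\|\partial s\|_{\theta_{X}}^{2}/s$ extends to a smooth positive function on a neighbourhood of $D$, and $1+a|\partial t|_{\theta_{X}}^{2}=(s+aP)/s$, so on $X\setminus D$
\[
\sqrt{-1}\partial\bar\partial\log(1+a|\partial t|_{\theta_{X}}^{2})=\sqrt{-1}\partial\bar\partial\log(s+aP)+\theta_{X}
\]
(using $\sqrt{-1}\partial\bar\partial\log s=-\theta_{X}$ there), and the right-hand side is a smooth $(1,1)$-form near $D$ because $s+aP$ is smooth and positive. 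A Sherman--Morrison computation for the rank-one perturbation $\theta_{X}+a\sqrt{-1}\partial t\wedge\bar\partial t=e^{-at}\omega_{0}$, using that $\partial t$ blows up exactly in the conormal direction, likewise shows that its inverse extends to a smooth Hermitian form near $D$ whose restriction to $D$ is block-diagonal with only the $TD$-block, equal to $\theta_{D}^{-1}$.

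Consequently $e^{at}S(\omega_{0})=\mathrm{tr}_{e^{-at}\omega_{0}}\mathrm{Ric}(\omega_{0})$ extends to a smooth function on a neighbourhood of $D$, and its restriction to $D$ equals $\mathrm{tr}_{\theta_{D}}(\mathrm{Ric}(\omega_{0})|_{TD})$. Here the adjunction formula $K_{D}=(K_{X}\otimes L_{X})|_{D}$ enters: it identifies $\mathrm{Ric}(\theta_{X})|_{TD}$ together with the normal-bundle curvature term produced by $\log(s+aP)$ as $\mathrm{Ric}(\theta_{D})$, so that $\mathrm{Ric}(\omega_{0})|_{TD}=\mathrm{Ric}(\theta_{D})-na\,\theta_{D}$ and the boundary value of $e^{at}S(\omega_{0})$ is $\mathrm{tr}_{\theta_{D}}(\mathrm{Ric}(\theta_{D})-na\,\theta_{D})=S(\theta_{D})-na(n-1)=S(\theta_{D})-\hat{S}_{D}$. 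Under the hypothesis $S(\theta_{D})\equiv\hat{S}_{D}$ this vanishes along $D$, which already yields $e^{at}S(\omega_{0})=O(s^{1/2})$, i.e. $S(\omega_{0})=O(r^{-2-n(n-1)/\hat{S}_{D}})$.

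The remaining, and hardest, step is to improve $O(s^{1/2})$ to $O(s)$: one must show that the first-order Taylor coefficient of the smooth function $e^{at}S(\omega_{0})$ transverse to $D$ also vanishes. I would obtain this by computing the transverse first derivative of $\mathrm{tr}_{e^{-at}\omega_{0}}\mathrm{Ric}(\omega_{0})$ in holomorphic coordinates adapted to $D$ (with $\sigma_{D}$ a defining coordinate and $\theta_{X}$ in normal form along $D$), carefully collecting the transverse-jet contributions of the smooth extensions of the inverse metric and of $\mathrm{Ric}(\omega_{0})$ constructed above, and verifying that, once the constant scalar curvature equation $S(\theta_{D})\equiv\hat{S}_{D}$ on $D$ is used, these contributions cancel (the surviving terms being covariant derivatives along $D$ of $S(\theta_{D})-\hat{S}_{D}$, which vanish). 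Conceptually, once the $r^{-2}$-coefficient of $S(\omega_{0})$, which detects $S(\theta_{D})-\hat{S}_{D}$, is killed, the next genuine obstruction sits one order deeper, at the scale $s$ of the corrections to the Bando--Kobayashi model; this order-by-order bookkeeping near $D$ is the technical heart of the argument, carried out in \cite{Aoi1} along the lines of the expansion in \cite{BK}. Granting it, $S(\omega_{0})=e^{-at}\cdot e^{at}S(\omega_{0})=O(e^{-at}s)=O(r^{-2-2n(n-1)/\hat{S}_{D}})$, which is the claim.
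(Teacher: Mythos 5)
Your reduction is correct and cleanly executed as far as it goes: with $a=\hat{S}_{D}/n(n-1)$ one indeed has $\omega_{0}=e^{at}(\theta_{X}+a\sqrt{-1}\partial t\wedge\overline{\partial}t)$, the form $\mathrm{Ric}(\omega_{0})=\mathrm{Ric}(\theta_{X})-(na+1)\theta_{X}-\sqrt{-1}\partial\overline{\partial}\log(s+aP)$ and the inverse of $e^{-at}\omega_{0}$ both extend smoothly across $D$, and the adjunction bookkeeping correctly identifies the boundary value of $e^{at}S(\omega_{0})$ with $S(\theta_{D})-\hat{S}_{D}$, which vanishes in the cscK case. But this only gives $e^{at}S(\omega_{0})=O(s^{1/2})$, i.e. $S(\omega_{0})=O(r^{-2-n(n-1)/\hat{S}_{D}})$, which is strictly weaker than the statement. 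The entire remaining content of the theorem -- the doubling of the exponent, equivalent to $e^{at}S(\omega_{0})=O(s)$, i.e. to the vanishing along $D$ of the transverse first-order Taylor coefficient of the smooth extension -- is not proved in your proposal: it is asserted as a cancellation ``once the cscK equation is used'' and then explicitly granted by appeal to \cite{Aoi1}. That is a genuine gap, not a routine verification. The coefficient of $w$ (and $\overline{w}$) in the transverse expansion involves normal derivatives along $D$ of $\theta_{X}$, of $\mathrm{Ric}(\theta_{X})$ and of the quantity $P=\|\partial s\|^{2}_{\theta_{X}}/s$, none of which are determined by $\theta_{D}$ alone; there is no symmetry (the $S^{1}$-rotation in the normal variable is not an isometry of $\omega_{0}$) and no identity exhibited that forces these contributions to assemble into tangential derivatives of $S(\theta_{D})-\hat{S}_{D}$. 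Indeed, the exact momentum-construction model on the total space of $L_{D}$ with cscK base is scalar-flat, and the deviation of $(h_{X},\theta_{X})$ from that model near $D$ is generically of size $O(|w|)=O(s^{1/2})$, so the improvement to $O(s)$ is precisely the nontrivial point that must be argued, not granted.

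Note also that the present paper contains no proof of Theorem \ref{scalar curvature decay}; it is quoted from \cite{Aoi1}, where the decay is established by an explicit expansion of $S(\omega_{0})$ near $D$. So a self-contained verification cannot lean on ``carried out in \cite{Aoi1}'' for the decisive step. To complete your argument you would have to actually perform the first-order transverse computation you describe (or, alternatively, show that $h_{X}$ can be modified, without changing $\theta_{D}$, so as to kill the linear term, and that $\omega_{0}$ may be built from that adapted metric), and verify the cancellation in detail; as written, the proposal establishes only $S(\omega_{0})=O(r^{-2-n(n-1)/\hat{S}_{D}})$.
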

Thus, the cscK condition implies that $S(\omega_{0}) \in C^{k,\alpha}_{\delta}$ for some $\delta > 2$ and  any $k,\alpha$.\

\medskip
{\it Step} 2. To construct a complete scalar-flat {\kah} metric on $X \setminus D$, the linearization of the scalar curvature operator plays an important role :
$$
L_{\omega_{0}} = - \mathcal{D}^{*}_{\omega_{0}}\mathcal{D}_{\omega_{0}} + (\nabla^{1,0} \ast , \nabla^{0,1} S(\omega_{0}))_{\omega_{0}}.
$$
Here, $\mathcal{D}_{\omega_{0}} = \overline{\partial} \circ \nabla^{1,0}$.\
We will show that if $4 < \delta <2n $ and there is no nonzero holomorphic vector field on $X$ which vanishes on $D$, then $\mathcal{D}^{*}_{\omega_{0}}\mathcal{D}_{\omega_{0}} : C^{4,\alpha}_{\delta - 4} \to C^{0,\alpha}_{\delta}$ is isomorphic.\
For such operators, we consider the following :

\begin{condition}
\label{condition A}
Assume that $n \geq 3$ and there is no nonzero holomorphic vector field on $X$ which vanishes on $D$.\
For $4 < \delta <2n $, the operator
$$
L_{\omega_{0}} : C^{4,\alpha}_{\delta - 4} \to C^{0,\alpha}_{\delta}
$$
is isomorphic, i.e., we can find a constant $\hat{K}>0$ such that
$$
||L_{\omega_{0}} \phi ||_{C^{0,\alpha}_{\delta}} \geq \hat{K} || \phi ||_{C^{4,\alpha}_{\delta - 4}}
$$
for any $ \phi \in C^{4,\alpha}_{\delta - 4} $.\
\end{condition}

In addition, we consider
\begin{condition}
\label{condition B}
$$
|| S(\omega_{0}) ||_{C^{0,\alpha}_{\delta}} < c_{0}\hat{K}/2.
$$
\end{condition}

Here, the constant $c_{0}$ is defined Lemma 6.2 in \cite{Aoi1}.\
Under these conditions, we have the following result (see \cite{Aoi1}) :

\begin{thm}
\label{complete scalar-flat}
Assume that $n \geq 3$ and there is no nonzero holomorphic vector field on $X$ which vanishes on $D$.\
Assume that $\theta_{D}$ is a constant scalar curvature {\kah} metric satisfying
$$
0 < \hat{S}_{D} < n(n-1).
$$
Assume moreover that Condition $\ref{condition A}$ and Condition $\ref{condition B}$ hold,
then $X \setminus D$ admits a complete scalar-flat {\kah} metric.\
\end{thm}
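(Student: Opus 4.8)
The plan is to produce the desired metric in the form $\omega_{\phi}:=\omega_{0}+\dol\phi$ by solving the scalar-flat equation $S(\omega_{\phi})=0$ with a contraction-mapping argument in the weighted \hol{} space $C^{4,\alpha}_{\delta-4}$, for a suitable weight $\delta$. First I would fix $\delta$ with $4<\delta<2n$ and $S(\omega_{0})\in C^{0,\alpha}_{\delta}$. Such a $\delta$ exists precisely because of the numerical hypotheses: $n\geq 3$ makes the interval $(4,2n)$ nonempty, while the cscK assumption on $D$ together with $\hat{S}_{D}<n(n-1)$ gives, by Theorem \ref{scalar curvature decay}, the decay $S(\omega_{0})=O(r^{-2-2n(n-1)/\hat{S}_{D}})$ with exponent strictly less than $-4$ (and the same estimate with covariant derivatives), so that $S(\omega_{0})\in C^{0,\alpha}_{\delta}$ for $\delta$ slightly above $4$. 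With $\delta$ fixed, Condition \ref{condition A} provides the constant $\hat{K}$ with $\|L_{\omega_{0}}\phi\|_{C^{0,\alpha}_{\delta}}\geq\hat{K}\|\phi\|_{C^{4,\alpha}_{\delta-4}}$, whence $L_{\omega_{0}}\colon C^{4,\alpha}_{\delta-4}\to C^{0,\alpha}_{\delta}$ is an isomorphism with $\|L_{\omega_{0}}^{-1}\|\leq\hat{K}^{-1}$, and Condition \ref{condition B} gives $\|S(\omega_{0})\|_{C^{0,\alpha}_{\delta}}<c_{0}\hat{K}/2$.

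Next I would expand the scalar curvature around $\omega_{0}$: writing
$$
S(\omega_{\phi})=S(\omega_{0})+L_{\omega_{0}}\phi+Q_{\omega_{0}}(\phi),
$$
the remainder $Q_{\omega_{0}}$ collects everything nonlinear in $\phi$ and its derivatives up to order four. The decisive analytic input is Lemma 6.2 of \cite{Aoi1}, which is the source of the constant $c_{0}$: it asserts that on the closed ball $\overline{B}$ of radius $c_{0}$ about the origin in $C^{4,\alpha}_{\delta-4}$ the map $Q_{\omega_{0}}$ is well defined with values in $C^{0,\alpha}_{\delta}$, vanishes at $0$, and satisfies the Lipschitz (hence quadratic) estimate
$$
\|Q_{\omega_{0}}(\phi_{1})-Q_{\omega_{0}}(\phi_{2})\|_{C^{0,\alpha}_{\delta}}\leq\tfrac{1}{2}\hat{K}\,\|\phi_{1}-\phi_{2}\|_{C^{4,\alpha}_{\delta-4}}\qquad(\phi_{1},\phi_{2}\in\overline{B}).
$$
Establishing such a bound is the main obstacle: it requires the asymptotically conical description of $(X\setminus D,\omega_{0})$, the mapping properties of $\dol$ between the weighted spaces, the fast decay of $S(\omega_{0})$ and of the curvature of $\omega_{0}$, and careful control of the genuinely fourth-order nonlinearity in the scalar-curvature operator; the choice $\delta-4>0$, which forces potentials in $C^{4,\alpha}_{\delta-4}$ to decay, is used here in an essential way.

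Granting this, solving $S(\omega_{\phi})=0$ is equivalent, via the isomorphism $L_{\omega_{0}}$, to the fixed-point equation $\phi=\mathcal{T}(\phi)$ for
$$
\mathcal{T}(\phi):=-L_{\omega_{0}}^{-1}\bigl(S(\omega_{0})+Q_{\omega_{0}}(\phi)\bigr).
$$
Then I would check that $\mathcal{T}$ maps $\overline{B}$ into itself: for $\phi\in\overline{B}$,
$$
\|\mathcal{T}(\phi)\|_{C^{4,\alpha}_{\delta-4}}\leq\hat{K}^{-1}\bigl(\|S(\omega_{0})\|_{C^{0,\alpha}_{\delta}}+\|Q_{\omega_{0}}(\phi)\|_{C^{0,\alpha}_{\delta}}\bigr)<\hat{K}^{-1}\Bigl(\tfrac{c_{0}\hat{K}}{2}+\tfrac{\hat{K}}{2}c_{0}\Bigr)=c_{0},
$$
using Condition \ref{condition B} and the estimate above with $\phi_{2}=0$; and that $\mathcal{T}$ is a contraction, since $\|\mathcal{T}(\phi_{1})-\mathcal{T}(\phi_{2})\|_{C^{4,\alpha}_{\delta-4}}\leq\hat{K}^{-1}\cdot\tfrac{1}{2}\hat{K}\,\|\phi_{1}-\phi_{2}\|_{C^{4,\alpha}_{\delta-4}}=\tfrac{1}{2}\|\phi_{1}-\phi_{2}\|_{C^{4,\alpha}_{\delta-4}}$. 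By the Banach fixed-point theorem there is a unique $\phi_{\infty}\in\overline{B}$ with $S(\omega_{0}+\dol\phi_{\infty})=0$.

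Finally I would verify that $\omega_{\infty}:=\omega_{0}+\dol\phi_{\infty}$ is the metric we want. Since $\phi_{\infty}\in C^{4,\alpha}_{\delta-4}$ with $\delta-4>0$, the $\omega_{0}$-norm of $\dol\phi_{\infty}$ tends to $0$ at infinity, so (shrinking $c_{0}$ if necessary) $\omega_{\infty}$ is positive everywhere and uniformly equivalent to $\omega_{0}$ outside a compact set; as $\omega_{0}$ is complete, so is $\omega_{\infty}$. By construction $S(\omega_{\infty})=0$, and since $\phi_{\infty}$ satisfies a fourth-order elliptic equation with smooth coefficients, elliptic bootstrapping upgrades it to a smooth function, so $\omega_{\infty}$ is a genuine complete scalar-flat \kah{} metric on $X\setminus D$. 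The hypothesis that there is no nonzero holomorphic vector field on $X$ vanishing on $D$ enters only through Condition \ref{condition A}: it is what forces $\mathcal{D}^{*}_{\omega_{0}}\mathcal{D}_{\omega_{0}}$, and hence $L_{\omega_{0}}$ (a fast-decaying lower-order perturbation of it), to be injective, so that the Fredholm theory on the weighted spaces yields an isomorphism in the range $4<\delta<2n$.
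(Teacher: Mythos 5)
Your proposal is correct and follows essentially the same route as the paper: the theorem is proved (in \cite{Aoi1}, and mirrored in Section 3 here for $\overline{\omega}$) by exactly this Arezzo--Pacard-type scheme, expanding $S(\omega_{0}+\dol\phi)=S(\omega_{0})+L_{\omega_{0}}\phi+Q_{\omega_{0}}(\phi)$, using Condition \ref{condition A} for the uniform invertibility of $L_{\omega_{0}}$ on $C^{4,\alpha}_{\delta-4}\to C^{0,\alpha}_{\delta}$, the Lemma 6.2-type estimate $\|L_{\omega_{0}+s\dol\phi}-L_{\omega_{0}}\|\leq\hat{K}/2$ on the $c_{0}$-ball for the quadratic remainder, and Condition \ref{condition B} to close the contraction $\mathcal{N}(\phi)=-L_{\omega_{0}}^{-1}(S(\omega_{0})+Q_{\omega_{0}}(\phi))$. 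Your choice of weight via the decay $S(\omega_{0})=O(r^{-2-2n(n-1)/\hat{S}_{D}})$ and the final positivity/completeness check also match the intended argument.
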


In fact, we can show the existence of a complete scalar-flat {\kah} metric on $X \setminus D$ under the following assumptions : (i) $n \geq 3$ and there is no nonzero holomorphic vector field on $X$ which vanishes on $D$, (ii) there exists a complete {\kah} metric on $X \setminus D$ which is of asymptotically conical geometry, such that its scalar curvature is sufficiently small and decays at a higher order.\
So, if there exists a complete {\kah} metric on $X \setminus D$ which is sufficiently close to $\omega_{0}$ at infinity, satisfying Condition \ref{condition A} and Condition \ref{condition B}, we can show the existence of a complete scalar-flat {\kah} metric on $X \setminus D$.\

Theorem \ref{complete scalar-flat} is proved by the fixed point theorem on the weighted Banach space $C^{4,\alpha}_{\delta - 4}(X \setminus D)$ by following Arezzo-Pacard \cite{AP1}, \cite{AP2} (see also \cite{Sz1}).\
In general, constants $c_{0}, \hat{K}$ which arise in Condition \ref{condition A} and Condition \ref{condition B} depend on the background {\kah} metric $\omega_{0}$.\
In addition, to construct such a {\kah} metric, we have to find a complete {\kah} metric $X \setminus D$ whose scalar curvature is arbitrarily small.\

\medskip
{\it Step} 3. We consider a degenerate (meromorphic) complex {\ma} equation.\
Take positive integers $l > n$ and $m$ such that the line bundle $K_{X}^{-l} \otimes L_{X}^{m}$ is very ample.\
Let $F \in |K_{X}^{-l} \otimes L_{X}^{m}|$ be a smooth hypersurface defined by a holomorphic section $\sigma_{F} \in H^{0}(X,K_{X}^{-l} \otimes L_{X}^{m})$ such that the divisor $D+F$ is simple normal crossing.\
For a defining section $\sigma_{D} \in H^{0}(X,L_{X})$ of $D$, set
$$
\xi := \sigma_{F} \otimes \sigma_{D}^{-m}.
$$
From the result due to Yau \cite[Theorem 7]{Yau}, we can solve the following degenerate complex Monge-Amp$\grave{{\rm e}}$re equation:
$$
(\theta_{X} + \dol \varphi)^{n} = \xi^{-1/l} \wedge \overline{\xi}^{-1/l}.
$$
Moreover, it follows from a priori estimate due to  Ko\l odziej \cite{Ko} that the solution $\varphi$ is bounded on $X$.\
Thus, we can glue plurisubharmonic functions by using the regularized maximum function.\
To compute the scalar curvature of the glued {\kah} metric, we need to study behaviors of higher order derivatives of the solution $\varphi$.\
So, we give explicit estimates of them near the intersection $D \cap F$ (see \cite{Aoi2}) :
\begin{thm}
\label{solution}
Let $(z^{i})_{i=1}^{n} = (z^{1},z^{2},...,z^{n-2},w_{F},w_{D})$ be local holomorphic coordinates such that $\{ w_{F} = 0 \} = F$ and $\{ w_{D} = 0 \} = D$.\
Then, there exists a positive integer $a(n)$ depending only on the dimension $n$ such that 

\begin{eqnarray*}
\left| \frac{\partial^{2}}{\partial z^{i} \partial \overline{z}^{j}} \partial^{\alpha} \varphi \right| &=& O\left( |w_{D}|^{-2a(n) m/l}|w_{F}|^{-2a(n) / l}  \right),\\
\left| \frac{\partial^{4}}{\partial w_{F}^{2} \partial \overline{w_{F}^{2}}}  \varphi \right| &=& O\left( |w_{D}|^{-2 a(n) m/l}|w_{F}|^{ -2 - 2a(n)/l } \right),\\
\left| \frac{\partial^{4}}{\partial w_{D}^{2} \partial \overline{w_{D}^{2}}} \varphi \right| &=& O\left( |w_{D}|^{ -2 -2a(n)m/l }|w_{F}|^{-2a(n)/l} \right),
\end{eqnarray*}
as $|w_{F}|,|w_{D}| \to 0$, for any $1\leq i, j \leq n-2$ and multi-index $\alpha = (\alpha_{1},...,\alpha_{n})$ with $0 \leq \sum_{i} \alpha_{i} \leq 2$.\

\end{thm}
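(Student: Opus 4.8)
The plan is to obtain the three estimates as uniform a priori bounds along a smooth approximation of the degenerate right-hand side, the analytic core being a weighted second-order (Laplacian) bound against a conical model and the higher-order bounds following by a bootstrap in which the conical weights accumulate. First I would localize. Near a point of $D\cap F$, with coordinates as in the statement, write $\theta_X^{n}=e^{\psi_0}\,dV$ with $dV$ a fixed smooth volume form, $\psi_0\in C^{\infty}$, and use $\sigma_F=w_F u_F$, $\sigma_D=w_D u_D$ with $u_F,u_D$ nonvanishing holomorphic; the equation becomes
\[
\det\!\bigl(\theta_{X,i\bar j}+\varphi_{i\bar j}\bigr)=e^{\psi},\qquad
\psi=-\tfrac1l\log|w_F|^{2}+\tfrac ml\log|w_D|^{2}+\psi_1,\quad \psi_1\in C^{\infty},
\]
so that $\psi$ is smooth off $D\cup F$ with $\bigl|\partial^{\gamma}\psi\bigr|=O\!\bigl(|w_F|^{-\gamma_F}|w_D|^{-\gamma_D}\bigr)$, where $\gamma_F,\gamma_D$ count the $w_F$- resp.\ $w_D$-derivatives in $\gamma$. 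Then I would replace $e^{\psi}$ by the smooth family $e^{\psi_{\varepsilon}}$ coming from $|w_F|^{2}\mapsto|w_F|^{2}+\varepsilon$, $|w_D|^{2}\mapsto|w_D|^{2}+\varepsilon$, solve (Yau) the nondegenerate equations for $\varphi_{\varepsilon}\in C^{\infty}(X)$, and aim at the three bounds for $\varphi_{\varepsilon}$ uniformly in $\varepsilon$; Ko\l odziej's estimate gives $\|\varphi_{\varepsilon}\|_{L^{\infty}}\le C$ uniformly, and the stability of the complex \ma operator identifies $\lim_{\varepsilon\to0}\varphi_{\varepsilon}$ with $\varphi$ and transfers the estimates.

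Next I would introduce a product-type conical model. Set $\beta_F:=1-\tfrac1l\in(0,1)$, $\beta_D:=1+\tfrac ml>1$, and work near $D\cap F$ with
\[
\omega_{\mathrm{ref}}:=\theta_X+a\,\dol\,|\sigma_F|_{h_F}^{2\beta_F}
\]
for small $a>0$ (globally one uses routine cutoffs); its $w_F\bar w_F$-component is $\asymp|w_F|^{-2/l}$, so $\omega_{\mathrm{ref}}$ captures the singular behaviour of $e^{\psi}\theta_X^{n}$ along $F$ exactly, while along $D$ the right-hand side only \emph{vanishes}, by the positive power $|w_D|^{2m/l}$, so the degeneracy there is a harmless conical factor. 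The core step is a uniform weighted second-order estimate: applying the Chern--Lu/Aubin--Yau inequality to $\omega_{\varepsilon}:=\theta_X+\dol\varphi_{\varepsilon}$ against (the $\varepsilon$-regularisation of) $\omega_{\mathrm{ref}}$, using the one-sided curvature bound of $\omega_{\mathrm{ref}}$ in the conical sense and $\|\varphi_{\varepsilon}\|_{L^{\infty}}\le C$, a maximum principle on $X$ gives $\operatorname{tr}_{\omega_{\mathrm{ref}}}\omega_{\varepsilon}\le C$, hence $\omega_{\varepsilon}\le C\,\omega_{\mathrm{ref}}$; combined with the volume identity $\omega_{\varepsilon}^{n}=e^{\psi_{\varepsilon}}\theta_X^{n}$ this yields the two-sided bound $c\,|w_D|^{2m/l}\,\omega_{\mathrm{ref}}\le\omega_{\varepsilon}\le C\,\omega_{\mathrm{ref}}$ uniformly in $\varepsilon$. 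In particular $|(\varphi_{\varepsilon})_{i\bar j}|=O\!\bigl(|w_F|^{-2/l}\bigr)$, which already contains the $\alpha=0$ case of the first estimate.

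For the higher-order bounds I would bootstrap off $D\cup F$. On a region where $|w_F|\asymp s$, $|w_D|\asymp t$ the equation is smooth and uniformly elliptic relative to $\omega_{\mathrm{ref}}$ by the second-order bound, so after the anisotropic rescaling adapted to the cone angles $\beta_F,\beta_D$ and to the $\omega_{\mathrm{ref}}$-geometry of that region, and after checking that the large additive constants $\tfrac1l\log s^{2}$, $\tfrac ml\log t^{2}$ in $\psi$ disappear under differentiation while the remainders of $\psi$, and the metric, have $C^{r}$-norms bounded uniformly in $s,t$, the interior Evans--Krylov estimate followed by the Schauder bootstrap control the rescaled potential in every $C^{r,\alpha}$ by constants depending only on $r,\alpha,n$. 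Undoing the rescaling, each $w_F$-differentiation contributes, in the exponents, one power of $|w_F|^{-1/l}$ and each $w_D$-differentiation one power of $|w_D|^{-m/l}$; iterating the scheme up to fourth order and closing the coupled system — the number of derivatives of $\psi$ and of the metric consumed, and the way the $(i\bar j)$-entries are tied together through the \ma determinant, being controlled by a dimension-dependent quantity — one collects the worst exponent, which defines $a(n)$, and obtains the three displayed bounds: the first for $\partial_{z^{i}}\partial_{\bar z^{j}}\partial^{\alpha}\varphi$ with $|\alpha|\le2$, the other two for the pure conical fourth derivatives $\partial_{w_F}^{2}\partial_{\bar w_F}^{2}\varphi$ and $\partial_{w_D}^{2}\partial_{\bar w_D}^{2}\varphi$, the extra $|w_F|^{-2}$ resp.\ $|w_D|^{-2}$ being exactly the $|w_F|^{2\beta_F-4}$ resp.\ $|w_D|^{2\beta_D-4}$ size of the model cone. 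Finally one lets $\varepsilon\to0$.

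I expect the main obstacle to be the uniform weighted second-order estimate at the crossing $D\cap F$: since the cone along $F$ has angle $2\pi(1-1/l)<2\pi$, the model $\omega_{\mathrm{ref}}$ has unbounded curvature in the naive sense, so the Chern--Lu computation must be carried out in the conical framework, with the cutoff/barrier functions and the auxiliary term controlling $\Delta_{\omega_{\varepsilon}}$ of the conical potential chosen carefully near $D\cap F$, and with all constants kept uniform in $\varepsilon$ as $\varepsilon\to0$. A secondary, bookkeeping-type difficulty is tracking how the anisotropic rescaling factors accumulate through the Evans--Krylov--Schauder iteration and how the coupling of directions through the \ma determinant propagates the conical weights — this is precisely what forces the exponent to be the dimensional constant $a(n)$ rather than the naive value $1$.
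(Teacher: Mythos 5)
This theorem is not proved in the present paper at all: it is quoted from the author's previous work \cite{Aoi2}, where the estimates come from explicit weighted estimates in the style of Yau's Theorem 7 for the degenerate Monge--Amp\`ere equation (combined with Ko\l odziej's $L^\infty$ bound and P\u{a}un-type $C^2$ estimates, as the present paper's Section 3 indicates), the integer $a(n)$ recording the dimension-dependent losses in that iteration. Your programme (regularize, uniform $C^0$, weighted $C^2$ against a conical reference, rescaled Evans--Krylov/Schauder bootstrap) is reasonable in spirit and, since the theorem only asserts \emph{some} $a(n)$, would suffice if carried out; but its two load-bearing steps are not carried out, and one of them is wrong as stated. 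For the uniform second-order bound you invoke Chern--Lu for $\mathrm{tr}_{\omega_{\mathrm{ref}}}\omega_{\varepsilon}$, which requires a lower bound of the form $\mathrm{Ric}(\omega_{\varepsilon})\geq -C(\omega_{\varepsilon}+\omega_{\mathrm{ref}})$. Here $\mathrm{Ric}(\omega_{\varepsilon})=\mathrm{Ric}(\theta_{X})+\tfrac{1}{l}\sqrt{-1}\partial\overline{\partial}\log(|w_{F}|^{2}+\varepsilon)-\tfrac{m}{l}\sqrt{-1}\partial\overline{\partial}\log(|w_{D}|^{2}+\varepsilon)-\sqrt{-1}\partial\overline{\partial}\psi_{1}$, and the third term is negative of size up to $\varepsilon^{-1}$ near $D$; it is controlled neither by $\omega_{\mathrm{ref}}$ (which you take smooth transverse to $D$) nor by $\omega_{\varepsilon}$ (whose $w_{D}\overline{w}_{D}$-component degenerates along $D$, by your own two-sided bound). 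So the vanishing of the right-hand side along $D$ is \emph{not} harmless for Chern--Lu; you must switch to the Aubin--Yau form of the estimate, where what is needed is a lower bound on the Laplacian of the log density relative to the (regularized) conical reference and the factor $\tfrac{m}{l}\log(|w_{D}|^{2}+\varepsilon)$ then enters with the good sign, or else insert an explicit barrier along $D$; and the upper bisectional curvature bound for the regularized conical reference near the crossing $D\cap F$ is itself a nontrivial computation you only allude to.

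The second gap is the bootstrap that is supposed to produce $a(n)$. On the regions $|w_{F}|\asymp s$, $|w_{D}|\asymp t$ your two-sided bound gives an ellipticity ratio of order $t^{-2m/l}$, which blows up as $t\to 0$; consequently the Evans--Krylov H\"older exponent and the Schauder constants degenerate with $t$, and it is not automatic that the accumulated losses amount to a fixed polynomial power with a dimension-only exponent --- that is precisely the content of the theorem, so the sentence ``one collects the worst exponent, which defines $a(n)$'' assumes the conclusion. To get genuinely explicit polynomial exponents one should instead differentiate the equation and run Yau/Calabi-type third- and fourth-order estimates with explicit weights (which is what the quoted reference does), or first renormalize the metric in each region to restore uniform ellipticity and track exactly how the normalization re-enters the exponents, including for the mixed derivatives $\partial_{z^{i}}\partial_{\overline{z}^{j}}\partial^{\alpha}\varphi$ with $|\alpha|\leq 2$, which your sketch lumps into ``iterating the scheme.'' Until these two points are supplied, the proposal is a plausible plan rather than a proof.
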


By applying Theorem \ref{solution} and gluing plurisubharmonic functions, we have the following result (see \cite{Aoi2}) :
\begin{thm}
\label{small scalar curvature}
Assume that there exist positive integers $l>n$ and $m$ such that
\begin{equation}
\label{positivity 2}
\frac{a(n)m}{2l} < \frac{\hat{S}_{D}}{n(n-1)}
\end{equation}
and the line bundle $K_{X}^{-l} \otimes L_{X}^{m}$ is very ample.\
Here, $a(n)$ is the positive integer in Theorem \ref{solution}.\
Take a smooth hypersuface $F \in |K_{X}^{-l} \otimes L_{X}^{m}|$ such that $D + F$ is simple normal crossing.\
Then, for any relatively compact domain $Y \subset \subset X \setminus (D \cup F)$, there exists a complete {\kah} metric $\omega_{F}$ on $X \setminus D$ whose scalar curvature $S(\omega_{F}) = 0$ on $Y$ and is arbitrarily small on the complement of $Y$.\
In addition, $\omega_{F} = \omega_{0}$ on some neighborhood of $D \setminus (D \cap F)$.\
\end{thm}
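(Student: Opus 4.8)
The plan is to assemble $\omega_{F}$ from three pieces on $X\setminus D$, patched by the regularized maximum of plurisubharmonic functions: the model metric $\omega_{0}$ near $D$, a Ricci-flat {\ma} metric on a large region containing $Y$, and the background metric $\theta_{X}$ on a neighbourhood of $F\setminus(D\cap F)$. Recall that Yau's theorem furnishes a solution $\varphi$ of $(\theta_{X}+\dol\varphi)^{n}=\xi^{-1/l}\wedge\overline{\xi}^{-1/l}$, bounded by Ko\l odziej's estimate; set $\omega_{MA}:=\theta_{X}+\dol\varphi$. The decisive point is that, since $\xi=\sigma_{F}\otimes\sigma_{D}^{-m}$ is a meromorphic section of $K_{X}^{-l}$, the right-hand side is a smooth, nondegenerate volume form on $X\setminus(D\cup F)$ whose logarithm, relative to any smooth reference volume, is pluriharmonic there; hence $\mathrm{Ric}(\omega_{MA})=0$, and in particular $S(\omega_{MA})=0$, on $X\setminus(D\cup F)$. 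On the other hand $\omega_{MA}$ does not extend to a {\kah} metric on all of $X\setminus D$: along $D$ its volume tends to $0$ and along $F$ it tends to $\infty$, so near $D$ it must be replaced by the complete, asymptotically conical $\omega_{0}$, and near $F\setminus(D\cap F)$ it must be capped off by a genuine smooth metric, which is the role of $\theta_{X}$.

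I would organise the gluing through global potentials. On $X\setminus D$ one has $\theta_{X}=\dol t$ with $t=\log\|\sigma_{D}\|_{h_{X}}^{-2}$, so $\theta_{X}$, $\omega_{MA}$ and $\omega_{0}$ admit the strictly plurisubharmonic potentials $t$, $t+\varphi$ and $\frac{n(n-1)}{\hat S_{D}}\exp(\tfrac{\hat S_{D}}{n(n-1)}t)$ on $X\setminus D$. Since the $\omega_{0}$-potential grows like $\exp(\tfrac{\hat S_{D}}{n(n-1)}t)$ while the other two grow only linearly in $t$, the regularized maximum $u$ of these three potentials — with additive constants chosen so that the $\omega_{MA}$-potential dominates on $Y$ and the $\theta_{X}$-potential dominates on a neighbourhood of $F\setminus(D\cap F)$ — equals the $\omega_{0}$-potential near $D\setminus(D\cap F)$, equals the $\omega_{MA}$-potential on $Y$, and equals the $\theta_{X}$-potential near $F\setminus(D\cap F)$. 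The resulting form $\omega_{F}:=\dol u$ is positive everywhere, because $\dol$ of a regularized maximum dominates the convex combination of the $\dol$'s of its entries; it is complete, since near $D$ it coincides with the complete metric $\omega_{0}$, which also yields the last assertion $\omega_{F}=\omega_{0}$ near $D\setminus(D\cap F)$; and $S(\omega_{F})=0$ on $Y$, since there $\omega_{F}=\omega_{MA}$.

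There remains the estimate of $S(\omega_{F})$ off $Y$. The set $X\setminus(D\cup Y)$ decomposes into the region near $D\setminus(D\cap F)$ where $\omega_{F}=\omega_{0}$; the region near $F\setminus(D\cap F)$ together with its transition collar, where the only singularity of $\varphi$ is the mild factor $|w_{F}|^{-2a(n)/l}$ of Theorem \ref{solution} and the scalar curvature is made negligible by enlarging $Y$ and shrinking this neighbourhood; and the transition region around the corner $D\cap F$, which is the genuinely delicate one, since there $\dol\varphi$ is unbounded. In the corner one uses the full strength of Theorem \ref{solution} — the bounds $|\partial^{2}_{i\bar j}\partial^{\alpha}\varphi|=O(|w_{D}|^{-2a(n)m/l}|w_{F}|^{-2a(n)/l})$ and the accompanying fourth-order estimates. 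The hypothesis $\frac{a(n)m}{2l}<\frac{\hat S_{D}}{n(n-1)}$ is exactly what makes these singular contributions negligible against the conical growth $\exp(\tfrac{\hat S_{D}}{n(n-1)}t)\asymp|w_{D}|^{-2\hat S_{D}/(n(n-1))}$ of the $\omega_{0}$-potential near $D$: it guarantees that the regularized maximum still selects $\omega_{0}$ in the corner, that positivity of $\omega_{F}$ survives the singular perturbation, and — combining the scale-invariance of the scalar curvature with the freedom to push the transition collars toward $D$ and $F$ and to enlarge $Y$ — that the residual scalar curvature on all of $X\setminus(D\cup Y)$ is as small as prescribed in the weighted norm used in the sequel.

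The step I expect to be the main obstacle is precisely this last estimate: the quantitative control of $\omega_{F}$ and of the fourth-order quantity $S(\omega_{F})$ in the transition region around $D\cap F$. There one must feed the anisotropic, singular derivative bounds of Theorem \ref{solution} into the regularized-maximum formula while simultaneously tracking the sharp asymptotics of $\omega_{0}$ in its asymptotically conical chart and the interaction of the cutoffs with the two competing blow-ups in $|w_{D}|$ and $|w_{F}|$, and one must verify that the single inequality $\frac{a(n)m}{2l}<\frac{\hat S_{D}}{n(n-1)}$ dominates every term that arises. Once this is in hand, the Ricci-flatness of $\omega_{MA}$ on the bulk, the positivity and completeness of the patched metric, and the smallness of $S(\omega_{F})$ away from the corner are, by comparison, routine.
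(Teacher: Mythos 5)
Your overall architecture (three plurisubharmonic potentials glued by a regularized maximum, with the Ricci-flat Monge--Amp\`ere metric $\theta_X+\dol\varphi$ giving $S=0$ on $Y$ and $\omega_0$ giving completeness near $D$) matches the construction the paper recalls from \cite{Aoi2}, but your choice of the third piece breaks the argument in two places. First, you cap off near $F$ with $\theta_X$, i.e.\ with the potential $t$ (shifted, say, to $t+c_1$); but $t+c_1$ and $t+\varphi+c$ differ by the \emph{bounded} function $\varphi+(c-c_1)$ (Ko\l odziej), so no choice of additive constants can force $t+c_1$ to dominate precisely on a neighbourhood of $F\setminus(D\cap F)$ while $t+\varphi+c$ dominates on $Y$: the domination regions are level sets of $\varphi$, which you do not control, and for large $Y$ you would need $\varphi$ near $F$ to lie below $\inf_Y\varphi$, for which there is no reason. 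Second, and more fundamentally, even if the maximum did select $t+c_1$ near $F$, the glued metric would there equal $\theta_X$, whose scalar curvature is a fixed $O(1)$ function, and in the collar where the maximum interpolates between your $\omega_{MA}$ and $\theta_X$ the scalar curvature is likewise $O(1)$, with no parameter available to shrink it; this is incompatible with the conclusion that $S(\omega_F)$ is \emph{arbitrarily small} on the complement of $Y$.

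The missing idea is the regularized potential used in \cite{Aoi2} and recalled in Section 2 here: the three glued functions are $\Theta(t)$, $\tilde G_v^\beta(b)$ (essentially $\kappa\Theta(t)+G_v^\beta(\beta b)$ with $b=\log\|\sigma\|^{-2}$ for the section cutting out $F$), and $t+\varphi+c$. Because $G_v^\beta(\beta b)\approx v^{-1/\beta}\beta b\to+\infty$ at $F$, this potential automatically dominates the bounded-difference competitor $t+\varphi+c$ near $F$, which repairs the first problem; and because near $F$ it produces essentially $v^{-1/\beta}$ times a fixed K\"ahler form, its scalar curvature, and that of the adjacent transition region (governed by the parameters $c,v,\eta$), is $O(v^{1/\beta})$, hence arbitrarily small, which repairs the second. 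Note also that your assertion that the hypothesis $a(n)m/2l<\hat S_D/n(n-1)$ makes the regularized maximum select $\omega_0$ at the corner $D\cap F$ is not what happens in the actual construction: the $\tilde G_v^\beta$-piece wins arbitrarily close to $D\cap F$, which is exactly why $\omega_F$ fails to be asymptotically conical there (the defect the present paper repairs by averaging over $s$). The inequality is instead what lets the singular factors $\|\sigma_D\|^{-2a(n)m/l}$, $\|\sigma_F\|^{-2a(n)/l}$ from Theorem \ref{solution} be absorbed by the conical decay when one estimates $S(\omega_F)$ in the gluing region near $D$.
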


For example, if the anti-canonical line bundle $K_{X}^{-1}$ of the compact complex manifold $X$ is nef (in particular, $X$ is Fano), the assumption (\ref{positivity 2}) in Theorem \ref{small scalar curvature} holds, i.e., we can always find such integers $l,m$.\
In this paper, we treat the case that $K_{X}^{-1}$ has positivity in the senses of (\ref{positivity 1}) and (\ref{positivity 2}).\
From Theorem \ref{complete scalar-flat}, if there exists a complete {\kah} metric which is of asymptotically conical geometry and satisfies Condition \ref{condition A} and Condition \ref{condition B}, $X \setminus D$ admits a complete scalar-flat {\kah} metric.\
In fact, Theorem \ref{small scalar curvature} gives a {\kah} metric whose scalar curvature is under control.\
However, the {\kah} metric $\omega_{F}$ in Theorem \ref{small scalar curvature} is not of asymptotically conical geometry (near the intersection of $D$ and $F$).\
So, when we replace the complete {\kah} metric $\omega_{0}$ with $\omega_{F}$ obtained in Theorem \ref{small scalar curvature}, we can not apply Theorem \ref{complete scalar-flat} to a construction of a complete scalar-flat {\kah} metric.\

To solve this problem, we consider an average on some closed subset in $|K_{X}^{-l} \otimes L_{X}^{m+\beta}|$.\
Then, the asymptotically conicalness is recovered and we obtain the first result in this paper :
\begin{thm}
\label{asc and eq}
Assume that there are positive integers $l>n$ and $m$ such that the line bundle $K_{X}^{-l} \otimes L_{X}^{m}$ is very ample and
$$
\frac{a(n) m}{2l} < \frac{\hat{S}_{D}}{n(n-1)}.
$$
Then, there exists a complete {\kah} metric $\overline{\omega}$ on $X \setminus D$ satisfies following properties:
\begin{itemize}
\item $\overline{\omega}$ is equivalent to $\omega_{0}$, i.e., there is a constant $C>0$ such that
$$
C^{-1} \omega_{0} < \overline{\omega} < C \omega_{0}.
$$
Moreover, the {\kah} metric $\overline{\omega}$ is of asymptotically conical geometry.\
\item Assume that $n \geq 4$.\
If $\theta_{D}$ is cscK and $0<\hat{S}_{D}<n(n-1)$, the $C^{k,\alpha}$-norm of the scalar curvature $S(\overline{\omega})$ of weight $\delta \in (4 ,\min \{ 2n, 2 + 2n(n-1)/\hat{S}_{D} \})$ can be made arbitrarily small.\
\end{itemize} 
\end{thm}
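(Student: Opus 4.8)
The plan is to repair the single defect of the metric $\omega_{F}$ produced in Theorem~\ref{small scalar curvature} --- its failure to be of asymptotically conical geometry near $D\cap F$, caused by the pole of the \ma{} datum $\xi=\sigma_{F}\otimes\sigma_{D}^{-m}$ along $F$ --- by eliminating that pole: replace the single hypersurface $F$ by a base-point-free family and average over it. First I would fix the numerology. Since $\tfrac{a(n)m}{2l}<\tfrac{\hat{S}_{D}}{n(n-1)}$ \emph{strictly}, and since replacing $(l,m)$ by $(kl,km)$ changes neither the ratio $m/l$ nor the integer $a(n)$ while preserving very ampleness of $K_{X}^{-kl}\otimes L_{X}^{km}$, I would choose $k\gg 1$ and $\beta\geq 1$ so that $K_{X}^{-kl}\otimes L_{X}^{km+\beta}$ is very ample and
$$
\frac{a(n)}{2}\cdot\frac{km+\beta}{kl}<\frac{\hat{S}_{D}}{n(n-1)}.
$$
Then I would pick a closed subset $W$ of $|K_{X}^{-kl}\otimes L_{X}^{km+\beta}|$, contained in the dense locus where $D+F_{s}$ is simple normal crossing and such that $\{F_{s}\}_{s\in W}$ has empty common base locus, together with a smooth probability measure $d\mu$ on $W$, and form the averaged singular volume form
$$
\Omega:=\|\sigma_{D}\|_{h_{X}}^{2(km+\beta)/(kl)}\cdot\Bigl(\int_{W}\|\sigma_{F_{s}}\|^{-2/(kl)}\,d\mu(s)\Bigr)\cdot\Omega_{0},
$$
where $\Omega_{0}$ is a fixed smooth positive volume form and $\|\cdot\|$ a fixed Hermitian metric on the bundle. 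Because the family is base-point-free the middle factor is a smooth strictly positive function on $X$ (the exponent $2/(kl)<2$ makes the integral converge), so $\Omega$ is smooth and positive on $X\setminus D$ and vanishes along $D$ to order $2(km+\beta)/(kl)$.

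Next I would solve the degenerate complex \ma{} equation $(\theta_{X}+\dol\varphi)^{n}=\Omega$ via Yau \cite{Yau}, with $\varphi$ bounded on $X$ by Ko\l odziej \cite{Ko}. Since $\Omega$ behaves near a point of $D$ exactly like $|w_{D}|^{2(km+\beta)/(kl)}$ times a smooth positive density, with \emph{no} second divisor present, the proof of Theorem~\ref{solution} applies with the $w_{F}$-exponent deleted: the first and third estimates there hold with $|w_{F}|^{-2a(n)/l}$ replaced by $1$, so in particular the second derivatives of $\varphi$ grow at most like $|w_{D}|^{-2a(n)(km+\beta)/(kl)}$ near $D$, with blow-up exponent $2a(n)(km+\beta)/(kl)<2\hat{S}_{D}/(n(n-1))$ by the choice of $k$. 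I would then rerun the gluing of \cite{Aoi2} (the proof of Theorem~\ref{small scalar curvature}) verbatim with $\Omega$ in place of $\xi^{-1/l}\wedge\overline{\xi}^{-1/l}$: glue, by regularized maxima of potentials, the potential $\tfrac{n(n-1)}{\hat{S}_{D}}\exp\bigl(\tfrac{\hat{S}_{D}}{n(n-1)}t\bigr)$ of $\omega_{0}$ near $D$, the \ma{} metric $\theta_{X}+\dol\varphi$ on an intermediate region, and a scalar-flat metric on a prescribed compact core $Y$. The crucial gain is that, since $\Omega$ has no pole, the region where the $\omega_{0}$-potential (which blows up like $e^{\hat{S}_{D}t/(n(n-1))}$) dominates $t+\varphi$ (which grows only linearly in $t$) is now a \emph{full} collar neighbourhood of $D$, not merely a neighbourhood of $D\setminus(D\cap F)$. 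Hence the resulting complete \kah{} metric $\overline{\omega}$ coincides with $\omega_{0}$ on a neighbourhood of $D$, whence $C^{-1}\omega_{0}<\overline{\omega}<C\omega_{0}$ on $X\setminus D$ and $\overline{\omega}$ is of asymptotically conical geometry, proving the first bullet.

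For the second bullet I would estimate $S(\overline{\omega})$ region by region. On $Y$ it vanishes by construction; on the intermediate region it is governed by the gluing parameters and the derivative bounds above, hence can be made as small as desired by taking $k$ large, $Y$ large, and the regularization fine; near $D$ it equals $S(\omega_{0})$, and here the cscK hypothesis with $0<\hat{S}_{D}<n(n-1)$ lets me invoke Theorem~\ref{scalar curvature decay}, which gives $S(\omega_{0})=O(r^{-2-2n(n-1)/\hat{S}_{D}})$ and hence an arbitrarily small $C^{k,\alpha}_{\delta}$-norm for every $\delta<2+2n(n-1)/\hat{S}_{D}$. Since the weight window $(4,\min\{2n,\,2+2n(n-1)/\hat{S}_{D}\})$ is non-empty when $n\geq 4$, this yields the claim.

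I expect the main obstacle to be the middle step: checking that substituting the averaged volume form $\Omega$ for the meromorphic datum leaves untouched the parts of \cite{Aoi2}'s construction that make the scalar curvature small, while genuinely restoring the asymptotically conical structure --- i.e. bounding the curvature tensor of $\overline{\omega}$ and all its covariant derivatives relative to $\omega_{0}$. Via the gluing this reduces to the $F$-free derivative estimates for $\varphi$ near $D$, which is precisely the point of killing the pole along $F$. A secondary technical issue is arranging $W$, $\beta$ and $k$ so that $\Omega$ simultaneously has the correct vanishing order along $D$, is smooth and positive off $D$, and supports the a priori estimates inherited from Theorem~\ref{solution}.
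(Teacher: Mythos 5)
Your strategy is genuinely different from the paper's, and it has a gap that I think is fatal as written. The paper never removes the pole along $F$: it keeps the whole family of meromorphic data $\sigma_{s}=(\sigma_{0}+\tau\sum_{i}s_{i}\sigma_{i})\otimes\sigma_{D}^{-\beta}$, builds the glued metric $\omega_{F_{s}}$ of Theorem \ref{small scalar curvature} for \emph{each} $s$, and then averages the \emph{metrics}, $\overline{\omega}=\int_{\mathbb{D}^{h(l,m,\beta)}}\omega_{F_{s}}\,d\mu(s)$; asymptotic conicality is recovered because, for a fixed point near $D$, the set of parameters $s$ for which that point lies in the non-conical gluing region has exponentially small measure (Lemma \ref{eq}: $\overline{\omega}-\omega_{0}=O(\|\sigma_{D}\|^{2\beta})$), while each $\omega_{F_{s}}$ retains its small scalar curvature. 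You instead average the Monge--Amp\`ere \emph{datum}. This destroys the one structural feature that makes the whole construction produce small scalar curvature: the original right-hand side $\xi^{-1/l}\wedge\overline{\xi}^{-1/l}$ is locally $|f|^{-2/l}\,\eta\wedge\overline{\eta}$ with $f$ meromorphic and $\eta$ a holomorphic $n$-form, so $-\dol\log$ of it vanishes off $D\cup F$ and the Monge--Amp\`ere metric is Ricci-flat (hence scalar-flat) there --- this is what gives $S=0$ on the core $Y$ and smallness elsewhere. Your $\Omega=\|\sigma_{D}\|^{2(km+\beta)/(kl)}\bigl(\int_{W}\|\sigma_{F_{s}}\|^{-2/(kl)}d\mu\bigr)\Omega_{0}$ has
$-\dol\log\Omega$ containing the terms $\tfrac{km+\beta}{kl}\,\theta_{X}+\mathrm{Ric}(\Omega_{0})-\dol\log\!\int_{W}\|\sigma_{F_{s}}\|^{-2/(kl)}d\mu$, which is an $O(1)$ form (the curvature contributions no longer cancel, and $\log$ of an average is not an average of logs even if each $\xi_{s}$-datum is flat). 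So the solution of $(\theta_{X}+\dol\varphi)^{n}=\Omega$ is not Ricci-flat away from $D$, your "scalar-flat metric on a prescribed compact core $Y$" is not constructed, and the claim that the intermediate-region scalar curvature "can be made as small as desired by taking $k$ large" does not follow; you cannot rerun the gluing of \cite{Aoi2} verbatim because its smallness mechanism is exactly the meromorphic structure you have averaged away.

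Two further points would need real work even if the Ricci-flatness issue were repaired. First, the smoothness of $p\mapsto\int_{W}\|\sigma_{F_{s}}(p)\|^{-2/(kl)}d\mu(s)$ and the transfer of the a priori estimates of Theorem \ref{solution} to this new, non-meromorphic datum are asserted rather than proved (naive differentiation under the integral already fails at second order, and the estimates of \cite{Aoi2} are proved for the specific datum $\xi^{-1/l}\wedge\overline{\xi}^{-1/l}$). Second, the actual content of the second bullet is smallness of $S(\overline{\omega})$ in the \emph{weighted} norm of weight $\delta>4$, i.e.\ decay faster than $r^{-\delta}$ across gluing regions that recede to infinity as the parameters are tuned; in the paper this is where the delicate balancing $v^{1/\beta}c=k\log c$, $\tau^{2}=v$, $\beta>\delta$, the barrier/maximum-principle estimate for $\psi$, and the Schauder estimate enter, and your proposal does not address this part at all --- near $D$ the decay of $S(\omega_{0})$ from Theorem \ref{scalar curvature decay} handles only one of the regions.
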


Thus, we obtain the {\kah} metric $\overline{\omega}$ which is of asymptotically conical geometry.\
In addition, the scalar curvature of $\overline{\omega}$ is arbitrarily small in the sense of the weight norm.\

\medskip
{\it Step} 4. Finally, by applying the similar argument in the proof of Theorem \ref{complete scalar-flat} to the {\kah} metric $\overline{\omega}$ obtained in Theorem \ref{asc and eq}, we obtain our main result in this paper :

\begin{thm}
\label{complete scalar-flat kah}
Assume following conditions:
\begin{itemize}
\item $n\geq 6$ and there is no nonzero holomorphic vector field on $X$ which vanishes on $D$.\
\item The following inequality holds:
$$
0 < 3 \hat{S}_{D} < n(n-1).
$$
\item There are positive integers $l>n$ and $m$ such that the line bundle $K_{X}^{-l} \otimes L_{X}^{m}$ is very ample and
$$
\frac{a(n) m}{2l} < \frac{\hat{S}_{D}}{n(n-1)}.
$$
\end{itemize}
Then, if $D$ admits a cscK metric $\theta_{D}$, $X \setminus D$ admits a complete scalar-flat {\kah} metric.\
\end{thm}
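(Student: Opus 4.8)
The plan is to combine the previous results in the natural order, using Theorem \ref{asc and eq} to replace the original background metric $\omega_{0}$ by a metric $\overline{\omega}$ that both is asymptotically conical and has arbitrarily small scalar curvature in a weighted norm, and then to run the fixed-point argument of Theorem \ref{complete scalar-flat} with $\overline{\omega}$ in place of $\omega_{0}$. First I would fix a weight $\delta$. The hypothesis $0 < 3\hat{S}_{D} < n(n-1)$ is equivalent to $2 + 2n(n-1)/\hat{S}_{D} > 8$, so the interval
$$
I := \left(4,\ \min\left\{2n,\ 2 + \tfrac{2n(n-1)}{\hat{S}_{D}}\right\}\right)
$$
contains points $\delta > 4$; since $n \geq 6$ we have $2n \geq 12 > 8$, so $I$ is a nonempty open interval strictly to the right of $4$, and I may choose $\delta \in I$ with $4 < \delta < 2n$. (The slack from $n \geq 6$, rather than merely $n \geq 4$ as in Theorem \ref{asc and eq}, is what is needed to run the full nonlinear scheme of Step 4, exactly as $n \geq 3$ was upgraded in passing from the linear statement to Theorem \ref{complete scalar-flat} — the quadratic remainder in the Monge-Amp\`ere/scalar-curvature expansion must be controlled on $C^{4,\alpha}_{\delta-4}$, which forces $\delta - 4 > 0$ together with enough room below $2n$.)

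Next I would verify Condition \ref{condition A} for $\overline{\omega}$. By the first bullet of Theorem \ref{asc and eq}, $\overline{\omega}$ is equivalent to $\omega_{0}$ and is of asymptotically conical geometry, so the weighted Banach spaces $C^{k,\alpha}_{\delta}(X\setminus D)$ defined via $\overline{\omega}$ coincide (with equivalent norms) with those defined via $\omega_{0}$. The Fredholm and isomorphism theory for $\mathcal{D}^{*}_{\overline{\omega}}\mathcal{D}_{\overline{\omega}} : C^{4,\alpha}_{\delta-4} \to C^{0,\alpha}_{\delta}$ on an asymptotically conical manifold depends only on the asymptotic cone and on the absence of nontrivial kernel/cokernel; the indicial roots are the same as for $\omega_{0}$ because $\overline{\omega}$ and $\omega_{0}$ have the same asymptotics, and surjectivity follows once the kernel is trivial. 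Triviality of the kernel of $\mathcal{D}^{*}_{\overline{\omega}}\mathcal{D}_{\overline{\omega}}$ on $C^{4,\alpha}_{\delta-4}$ with $4 < \delta < 2n$ is exactly the statement that there is no nonzero holomorphic vector field on $X$ vanishing on $D$ (decaying holomorphic vector fields on $X\setminus D$ with the given decay rate extend to such fields on $X$, by the Hartogs-type extension used in \cite{Aoi1}), which is our hypothesis. The zeroth-order term $(\nabla^{1,0}\ast,\nabla^{0,1}S(\overline{\omega}))_{\overline{\omega}}$ is a compact perturbation relative to $\mathcal{D}^{*}_{\overline{\omega}}\mathcal{D}_{\overline{\omega}}$ (it involves $S(\overline{\omega})$, which lies in $C^{k,\alpha}_{\delta}$ with $\delta > 4 > 2$, hence decays), so $L_{\overline{\omega}} : C^{4,\alpha}_{\delta-4} \to C^{0,\alpha}_{\delta}$ is Fredholm of index zero and, having trivial kernel, is an isomorphism; this yields the constant $\hat{K} = \hat{K}(\overline{\omega}) > 0$ of Condition \ref{condition A}.

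Then I would arrange Condition \ref{condition B}. This is the one place where the order of quantifiers matters and is the main obstacle: $\hat{K}$ and the constant $c_{0}$ (from Lemma 6.2 in \cite{Aoi1}) depend on the background metric $\overline{\omega}$, while Theorem \ref{asc and eq} only says the weighted norm $\|S(\overline{\omega})\|_{C^{0,\alpha}_{\delta}}$ can be made \emph{arbitrarily small} along a family of such metrics $\overline{\omega}$. So I must check that as we push $\|S(\overline{\omega})\|_{C^{0,\alpha}_{\delta}} \to 0$ the constants $\hat{K}, c_{0}$ do not degenerate — i.e. that they stay bounded below uniformly along the family. This is plausible because along the family the metrics stay uniformly equivalent to $\omega_{0}$ with uniformly controlled asymptotically conical geometry (the construction in Theorem \ref{asc and eq} is a small perturbation supported near $D\cap F$, uniformly in the parameter $\beta$ or in the relatively compact $Y$), so the elliptic estimates and the Sobolev/Schauder constants entering $\hat{K}$ and $c_{0}$ are uniform; I would make this uniformity explicit by tracking the dependence in the proofs of Condition \ref{condition A} and of Lemma 6.2 in \cite{Aoi1}. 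Once a uniform lower bound $\hat{K} \geq \hat{K}_{0} > 0$, $c_{0} \geq c_{0,0} > 0$ is in hand, I pick the member $\overline{\omega}$ of the family with $\|S(\overline{\omega})\|_{C^{0,\alpha}_{\delta}} < c_{0,0}\hat{K}_{0}/2 \leq c_{0}\hat{K}/2$, so Condition \ref{condition B} holds.

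Finally, with $\overline{\omega}$ asymptotically conical, $n \geq 6 \geq 3$, $\theta_{D}$ cscK with $0 < \hat{S}_{D} < n(n-1)$ (which follows from $0 < 3\hat{S}_{D} < n(n-1)$), and Conditions \ref{condition A} and \ref{condition B} verified for $\overline{\omega}$, I apply the proof scheme of Theorem \ref{complete scalar-flat} verbatim — the fixed-point argument on $C^{4,\alpha}_{\delta-4}(X\setminus D)$ following Arezzo-Pacard \cite{AP1,AP2}, solving $S(\overline{\omega} + \dol\phi) = 0$ for a small $\phi \in C^{4,\alpha}_{\delta-4}$. As remarked after Theorem \ref{complete scalar-flat}, that proof only uses (i) $n \geq 3$ and no holomorphic vector field vanishing on $D$, and (ii) the existence of a complete asymptotically conical {\kah} metric with sufficiently small, sufficiently fast-decaying scalar curvature — and $\overline{\omega}$ supplies exactly (ii). The resulting metric $\overline{\omega} + \dol\phi$ is a complete scalar-flat {\kah} metric on $X\setminus D$, completing the proof. \bsq
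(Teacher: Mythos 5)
Your overall scheme (take $\overline{\omega}$ from Theorem \ref{asc and eq}, verify Condition \ref{condition A} and Condition \ref{condition B} for it, then run the Arezzo--Pacard fixed-point argument) is the same as the paper's, and you correctly locate the main obstacle: the constants $\hat{K}$ and $c_{0}$ must not degenerate as the parameters are pushed to make $\|S(\overline{\omega})\|_{C^{0,\alpha}_{\delta}}$ small. But at exactly that point you leave a genuine gap: you only assert the uniformity as ``plausible'' on the grounds that the family stays uniformly equivalent to $\omega_{0}$ with uniformly controlled asymptotically conical geometry, and that claim is false for this construction. The metrics $\overline{\omega}(c,v,\eta,\tau)$ are equivalent to $\omega_{0}$ only near $D$; near $F_{0}$ one has $\overline{\omega}\approx v^{-1/\beta}\dol b+\kappa\omega_{0}$, which blows up as $v\to 0$, so the family degenerates and no generic elliptic/Schauder-constant tracking gives a parameter-independent $\hat{K}$. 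The paper's actual argument (Theorem \ref{key theorem}, the core of Section 3) is quite different: for each fixed $\phi$ one argues by contradiction along a sequence $(\tau,v,c)\to(0,0,\infty)$, using integration by parts, the convergence $\mathcal{D}_{\overline{\omega}}\to\mathcal{D}_{\dol(t+\varphi)}$, and the volume estimate $\int_{\cup_{s}F_{s}}\overline{\omega}^{n}=O(v^{1-n/\beta})$, to produce a holomorphic vector field $\nabla^{1,0}\phi$ that extends to $X$ and vanishes on $D$, contradicting the hypothesis; the resonance theorem (uniform boundedness principle) then yields a uniform $K$, and Theorem \ref{small small} upgrades this to a uniform $\hat{K}$ for $L_{\overline{\omega}}$. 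Your proposal contains no substitute for this mechanism.

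A second, related defect is the weight. You choose $\delta$ only in $(4,\min\{2n,2+2n(n-1)/\hat{S}_{D}\})$, but the uniformity of $c_{0}$ (Lemma \ref{contraction lemma 2}) is not abstract either: it uses the explicit estimates for the degenerate Monge--Amp\`ere solution, $g^{i,\overline{j}}=O(r^{2(m/l)\,n(n-1)/\hat{S}_{D}})$, the hypothesis $a(n)m/2l<\hat{S}_{D}/n(n-1)$, and crucially $\delta>8$ (so that $\delta-4>4$ absorbs the factor $(r^{2}+1)^{2}$ coming from two inverse-metric contractions). This is precisely why the theorem assumes $0<3\hat{S}_{D}<n(n-1)$ (to make $2+2n(n-1)/\hat{S}_{D}>8$) and why, combined with the requirement $\epsilon>\delta+2$ with $\epsilon<2n$ in the proof of Theorem \ref{asc and eq}, one needs $n\geq 6$; your explanation of $n\geq 6$ as generic ``slack'' with only $\delta-4>0$ misses this, and with your weight range the contraction estimate for $Q_{\overline{\omega}}$ would not close.
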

In other word, we can solve the following forth order nonlinear partial differential equation:
$$
S(\overline{\omega} + \dol \phi) = 0, \hspace{7pt} \overline{\omega} + \dol \phi > 0, \hspace{7pt} \phi \in C^{4,\alpha}_{\delta - 4}
$$
for a weight $8 < \delta <\min \{ 2n, 2 + 2n(n-1)/\hat{S}_{D} \}$.\
The reason why we assume that $n \geq 6$ and $0 < 3 \hat{S}_{D} < n(n-1)$ in Theorem \ref{complete scalar-flat kah} is that we need the isomorphic Laplacian $\Delta_{\overline{\omega}}$ between higher order weighted Banach spaces.\

This paper is organized as follows.\
In Section 2, we will prove Theorem \ref{asc and eq}.\
Namely, we recover the asymptotically conicalness by constructing an average metric.\
In Section 3, we prove Theorem \ref{complete scalar-flat kah}, i.e., we show the existence of a complete scalar-flat {\kah} metric.\

\begin{ack}
The author would like to thank Professor Ryoichi Kobayashi who first brought the problem in this paper to his attention, for many helpful comments.\
\end{ack}

\section{
Proof of Theorem \ref{asc and eq}
}\label{sec:10}

In this section, we prove Theorem \ref{asc and eq}.\
We construct the complete {\kah} metric $\omega_{F} = \omega_{c,v,\eta}$ whose scalar curvature is arbitrarily small on $X \setminus D$ in \cite{Aoi2}.\
So, see the definitions of the parameters $c,v,\eta,\kappa$ and the functions $\Theta(t), G_v^\beta(\beta b), \tilde{G}_v^\beta(b)$ in Section 2 of \cite{Aoi2}.\
For $\beta \in \mathbb{Z}_{>0}$, take a holomorphic section $\sigma_{0}\in H^{0}(K_{X}^{-l} \otimes L_{X}^{m + \beta})$.\
We may assume that $D + F_{0}$ is simple normal crossing, where $F_{0}$ is a smooth hypersurface defined by $\sigma_{0}$.\
Let $(\sigma_{i})_{i} \subset H^{0}(K_{X}^{-l} \otimes L_{X}^{m+\beta})$ be an orthonormal basis with respect to the $L^2$ inner product.\
Take a sufficiently small number $\tau \in \mathbb{R}$.\
Write $h(l,m,\beta) := \dim H^{0}(K_{X}^{-l} \otimes L_{X}^{m + \beta})$.\
For $s = (s_{i})_{i} \in \mathbb{D}^{h(l,m,\beta)} := \{ z = (z_{i}) \in \mathbb{C}^{h(l,m,\beta)}| |z_{i}| \leq 1 \}$, define a meromorphic section of the line bundle $K_{X}^{-l} \otimes L_{X}^{m}$ by
$$
\sigma_{s} := (\sigma_{0} + \tau \sum_{i = 1}^{h(l,m,\beta)} s_{i} \sigma_{i}) \otimes \sigma_{D}^{-\beta}.
$$
Note that by taking a sufficiently small $\tau$, we may assume that $\sigma_{s} \neq 0$ for any $s \in \mathbb{D}^{h(l,m,\beta)}$.\
In addition,$\sigma_{s} \to \sigma_{0} \otimes \sigma_{D}^{-\beta}$ for any $s \in \mathbb{D}^{h(l,m,\beta)}$ as $\tau \to 0$.\
Let $F_{s}$ be a smooth hypersurface defined by ${\rm div}\sigma_{s} = F_{s} - \beta D$.\
Since $\sigma_{0}$ contained in $\sigma_{s}$ is not multiplied by $\tau$, the variation of $\tau$ affects the choice of $F_s$ if $\tau \neq 0$.\

By applying Theorem \ref{small scalar curvature}, we obtain a complete {\kah} metric $\omega_{F_{s}}$ with small scalar curvature for a meromorphic section $\sigma_{s} \otimes \sigma_{D}^{-m}$ of $K_{X}^{-l}$ (see \cite{Aoi2}).\
In fact, for a smooth function on $X \setminus (D \cup F_{s})$ defined by $b_{s} := \log || \sigma_{s} ||^{-2}$, we can obtain a {\kah} metric $\dol G_{v}^{\beta}(\beta b_{s})$ on $X$.\
Directly, we have
$$
\dol G_{v}^{\beta}(\beta b_{s}) = \left( \frac{1}{e^{- \beta b_{s}} + v} \right)^{1/\beta} \left( \beta \dol b_{s} + \frac{ e^{-\beta b_{s}}}{e^{-\beta b_{s}} + v} \sqrt{-1} \partial b_{s} \wedge \overline{\partial} b_{s} \right).
$$
This metric does not grow near $D$.\
When we glued plurisubharmonic functions in \cite{Aoi2}, we considered the {\kah} potential $\kappa \Theta(t) + G_{v}^{\beta}(\beta b_{s})$.\
In addition, $\lim_{b_{s} \to - \infty}\dol G_{v}^{\beta}(\beta b_{s}) > -\infty$.\
So, we can construct a complete {\kah} metric $\omega_{F_{s}}$ with small scalar curvature by using the regularized maximum $M_{\eta}$ (see \cite{De} or \cite{Aoi2}) to glue three plurisubharmonic functions $\Theta(t), \tilde{G}_{v}^{\beta}(b), t + \varphi + c$.\

$(X \setminus D, \omega_{F_{s}})$ is not of asymptotically conical geometry for any $s \in \mathbb{D}^{h(l,m,\beta)}$ (see Remark 4.2 in \cite{Aoi2}).\
To solve this problem, consider an average metric $\overline{\omega}$ defined by
$$
\overline{\omega} = \overline{\omega}(c,v,\eta,\tau) := \int_{\mathbb{D}^{h(l,m,\beta)}} \omega_{F_{s}} d\mu (s).
$$
Here, $\mu$ is the Lebesgue probability measure on $\mathbb{D}^{h(l,m,\beta)}$ and $c,v,\eta$ are parameters in the definition of $\omega_{F}$ in Theorem \ref{small scalar curvature} (see \cite{Aoi2}).\
Recall that $\eta = (\eta_{1},\eta_{2},\eta_{3})$ and $\eta_{1},\eta_{2} = O(c), \eta_{3}=O(1)$.\
To prove that $(X \setminus D, \overline{\omega})$ is of asymptotically conical geometry, it is enough to prove the following lemma:
\begin{lemma}
\label{eq}
For the {\kah} metric $\overline{\omega}$ defined above, we have
$$
\overline{\omega} - \omega_{0} = O( || \sigma_{D} ||^{2 \beta} )
$$
as $\sigma_{D} \to 0$.\
\end{lemma}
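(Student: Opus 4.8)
The plan is to write $\overline{\omega} - \omega_{0}$ as an average of $\omega_{F_{s}} - \omega_{0}$ and to bound it, at each point near $D$, by (the $\mu$-measure of the parameters $s$ for which that point lies in the locus where $\omega_{F_{s}} \neq \omega_{0}$) times (the pointwise size of $\omega_{F_{s}} - \omega_{0}$ on that locus). Since $\omega_{0}$ is independent of $s$ and $\mu$ is a probability measure,
$$
\overline{\omega} - \omega_{0} = \int_{\mathbb{D}^{h(l,m,\beta)}} \bigl( \omega_{F_{s}} - \omega_{0} \bigr)\, d\mu(s),
$$
so it suffices to estimate the integrand pointwise at a point $p \in X \setminus D$ with $\| \sigma_{D}(p) \|$ small. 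By Theorem \ref{small scalar curvature}, $\omega_{F_{s}} = \omega_{0}$ on a neighborhood of $D \setminus (D \cap F_{s})$, hence $\omega_{F_{s}}(p) - \omega_{0}(p) = 0$ unless $p$ lies in the part of $X \setminus D$ near $D \cap F_{s}$ in which the potential $\tilde{G}_{v}^{\beta}(b_{s})$ (or $t + \varphi + c$) dominates $\Theta(t)$ in the regularized maximum $M_{\eta}$ defining $\omega_{F_{s}}$. Using $b_{s} = \log \| \sigma_{s} \|^{-2} = -\log \| \sigma_{0} + \tau \sum_{i} s_{i} \sigma_{i} \|^{2} - \beta t$, together with the fact that near $D$ the potential $\Theta(t)$ grows exponentially in $t$ while $\tilde{G}_{v}^{\beta}(b_{s})$ grows at most linearly in $b_{s}$ and $t + \varphi + c$ at most linearly in $t$ (here $\varphi$ is bounded by Ko\l odziej's estimate), one obtains a threshold $\varepsilon = \varepsilon(p) > 0$ with $\omega_{F_{s}}(p) = \omega_{0}(p)$ whenever $\| ( \sigma_{0} + \tau \sum_{i} s_{i} \sigma_{i} )(p) \| \geq \varepsilon(p)$; moreover $\varepsilon(p)$ is forced to be smaller than every positive power of $\| \sigma_{D}(p) \|$ as $\sigma_{D} \to 0$, roughly of the order $\| \sigma_{D}(p) \|^{\beta} \exp \bigl( -c\, \| \sigma_{D}(p) \|^{-2\hat{S}_{D}/(n(n-1))} \bigr)$. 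Thus the integrand is supported on the set $B(p) := \{ s \in \mathbb{D}^{h(l,m,\beta)} : \| ( \sigma_{0} + \tau \sum_{i} s_{i} \sigma_{i} )(p) \| < \varepsilon(p) \}$.

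The second step is to bound $\mu(B(p))$. The line bundle $K_{X}^{-l} \otimes L_{X}^{m + \beta} = (K_{X}^{-l} \otimes L_{X}^{m}) \otimes L_{X}^{\beta}$ is a tensor product of a very ample and an ample line bundle, hence very ample and in particular base point free, so the orthonormal basis $(\sigma_{i})_{i}$ of $H^{0}(K_{X}^{-l} \otimes L_{X}^{m + \beta})$ does not vanish simultaneously at $p$. Therefore the affine evaluation map $s \mapsto ( \sigma_{0} + \tau \sum_{i} s_{i} \sigma_{i} )(p) \in ( K_{X}^{-l} \otimes L_{X}^{m + \beta} )_{p}$ is a submersion onto the one-dimensional fibre whose differential has norm $\tau \bigl( \sum_{i} \| \sigma_{i}(p) \|^{2} \bigr)^{1/2}$, and $\inf_{p \in X} \sum_{i} \| \sigma_{i}(p) \|^{2} > 0$ by compactness of $X$. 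Hence $B(p)$ lies in an $O(\varepsilon(p)/\tau)$-neighborhood of an affine complex hyperplane intersected with the unit polydisc, so
$$
\mu(B(p)) \leq C_{1}\, \tau^{-2}\, \varepsilon(p)^{2}
$$
with $C_{1}$ independent of $p$.

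The third step is to bound $| \omega_{F_{s}}(p) - \omega_{0}(p) |$ for $s \in B(p)$. In this range $p$ lies near $D \cap F_{s}$, where $\omega_{F_{s}}$ is controlled by $\dol \tilde{G}_{v}^{\beta}(b_{s})$ (and possibly $\dol(t + \varphi + c)$); from the explicit formula for $\dol G_{v}^{\beta}(\beta b_{s})$ recalled in the text, the fact that $\dol b_{s} = -\dol \log \| \sigma_{0} + \tau \sum_{i} s_{i} \sigma_{i} \|^{2} - \beta \dol t$ agrees with the restriction of a smooth bounded $(1,1)$-form on $X$, and the derivative estimates of Theorem \ref{solution} for $\varphi$, one checks that on $B(p)$ the metric $\omega_{F_{s}}$ grows at most polynomially in $\| \sigma_{D}(p) \|^{-1}$, whereas $\omega_{0}$ grows like $\| \sigma_{D} \|^{-2\hat{S}_{D}/(n(n-1))}$; hence $| \omega_{F_{s}}(p) - \omega_{0}(p) | \leq M(p)$ with $M(p) = O( \| \sigma_{D}(p) \|^{-N_{0}} )$ for a fixed $N_{0}$. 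Combining the three steps,
$$
| \overline{\omega}(p) - \omega_{0}(p) | \leq \sup_{s \in B(p)} | \omega_{F_{s}}(p) - \omega_{0}(p) | \cdot \mu(B(p)) \leq C_{1}\, \tau^{-2}\, \varepsilon(p)^{2}\, M(p),
$$
and since $\varepsilon(p)^{2}$ decays faster than every power of $\| \sigma_{D}(p) \|$, the right-hand side is $o( \| \sigma_{D}(p) \|^{N} )$ for every $N$, in particular $O( \| \sigma_{D}(p) \|^{2\beta} )$. Running the same argument on the covariant derivatives of $\omega_{F_{s}} - \omega_{0}$, with the corresponding derivative bounds for $\Theta$, $\tilde{G}_{v}^{\beta}$ and, via Theorem \ref{solution}, for $\varphi$, gives the analogous estimate in every $C^{k,\alpha}$-norm, which is exactly what is needed to conclude that $(X \setminus D, \overline{\omega})$ is of asymptotically conical geometry in Theorem \ref{asc and eq}.

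The step I expect to be the main obstacle is the third one, together with pinning down the threshold $\varepsilon(p)$: both require a careful re-examination of the regularized maximum gluing of \cite{Aoi2} and of the asymptotics of $\Theta(t)$, $\tilde{G}_{v}^{\beta}(b_{s})$ and the \ma\ solution $\varphi$ near $D \cap F_{s}$; in particular one must verify that $\varepsilon(p)$ decays super-polynomially and that $\omega_{F_{s}}$ grows only polynomially on the very thin bad set $B(p)$. By contrast, the reduction in the first step and the measure estimate in the second are comparatively routine.
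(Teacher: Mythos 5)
Your overall strategy is the same as the paper's: write $\overline{\omega}-\omega_{0}=\int(\omega_{F_{s}}-\omega_{0})\,d\mu(s)$, observe that for a point $p$ near $D$ the integrand vanishes unless $s$ lies in a ``bad'' set $B(p)$ where $\|\sigma_{s}(p)\|$ is exponentially small (your $\varepsilon(p)\sim\|\sigma_{D}\|^{\beta}\exp(-c\|\sigma_{D}\|^{-2\hat{S}_{D}/n(n-1)})$ matches the paper's inequality (\ref{region})), and bound $\mu(B(p))=O(\tau^{-2}\varepsilon(p)^{2})$ exactly as the paper does after the unitary rotation to $\tilde{\sigma}$. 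The gap is in your third step, which you yourself flagged: the claim that $|\omega_{F_{s}}(p)-\omega_{0}(p)|\leq M(p)=O(\|\sigma_{D}(p)\|^{-N_{0}})$ uniformly for $s\in B(p)$ is false. For $s\in B(p)$ the point $p$ is \emph{exponentially} close to $F_{s}$ ($\|\sigma_{s}(p)\|^{2}\lesssim\exp(-(v^{1/\beta}/\beta)((1-\kappa)\|\sigma_{D}\|^{-2\hat{S}_{D}/n(n-1)}-\eta_{1}-\eta_{2}))$), and the regularized maximum produces the gradient term $\partial\tilde{G}_{v}^{\beta}(b_{s})\wedge\overline{\partial}\tilde{G}_{v}^{\beta}(b_{s})$, which contains $\partial b_{s}\wedge\overline{\partial}b_{s}\sim\|\sigma_{s}(p)\|^{-2}$ and is therefore super-polynomially \emph{large} in $\|\sigma_{D}(p)\|^{-1}$ on $B(p)$, not polynomially bounded.

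Because of this, your conclusion that $\overline{\omega}-\omega_{0}$ decays faster than every power of $\|\sigma_{D}\|$ overshoots and cannot be obtained this way. The actual mechanism in the paper is a cancellation: the pointwise bound $\|\sigma_{s}\|^{-2}<\exp((v^{1/\beta}/\beta)((1-\kappa)\|\sigma_{D}\|^{-2\hat{S}_{D}/n(n-1)}+\eta_{1}+\eta_{2}))$ (from one side of the transition inequality (\ref{inequality 1})) multiplied against the measure bound $\mu(B(p))\lesssim\exp(-(v^{1/\beta}/\beta)((1-\kappa)\|\sigma_{D}\|^{-2\hat{S}_{D}/n(n-1)}-\eta_{1}-\eta_{2}))/(\tau^{2}\|\tilde{\sigma}\otimes\sigma_{D}^{-\beta}(p)\|^{2})$ (from the other side) makes the two exponentials cancel, leaving estimate (\ref{gg}), i.e.\ $\exp(2v^{1/\beta}(\eta_{1}+\eta_{2})/\beta)/(\tau^{2}\|\tilde{\sigma}\otimes\sigma_{D}^{-\beta}\|^{2})$. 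The factor $\|\tilde{\sigma}\otimes\sigma_{D}^{-\beta}\|^{-2}=O(\|\sigma_{D}\|^{2\beta})$, coming from the twist by $\sigma_{D}^{-\beta}$ in the definition of $\sigma_{s}$, is precisely the source of the stated rate $O(\|\sigma_{D}\|^{2\beta})$ — it is the exact outcome of this balance, not a generous weakening of a super-polynomial bound. To repair your argument you must replace the polynomial bound $M(p)$ by this exponential pointwise bound and carry out the cancellation against $\mu(B(p))$ (also checking, as the paper implicitly does, that the contribution $\int\partial\Theta\wedge\overline{\partial}\Theta\,d\mu$ is genuinely exponentially small and that the leftover factor $\exp(2v^{1/\beta}(\eta_{1}+\eta_{2})/\beta)$ is controlled by the parameter choices $\eta_{1},\eta_{2}=O(c)$, $cv^{1/\beta}=k\log c$).
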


\begin{proof}
The region where $(X \setminus D,\omega_{F_{s}})$ is not of asymptotically conical geometry is defined by
\begin{equation}
\label{inequality 1}
| \Theta(t) - \tilde{G}_{v}^{\beta}(b_s) | < \eta_1 + \eta_2
\end{equation}
(see \cite{Aoi2}).\
For sufficiently large $b_{s}>0$, we have $v^{-1/\beta} \beta b_{s} \approx G_{v}^{\beta}(\beta b_{s})$.\
Here, $b_{s} := \log || \sigma_{s} ||^{-2}$.\
From the following inequality
$$
v^{-1/\beta} \beta b_{s} \approx G_{v}^{\beta}(\beta b_{s}) > (1 - \kappa) \Theta(t) - \eta_{1} - \eta_{2}
$$
obtained by (\ref{inequality 1}), we have
$$
|| \sigma_{s} ||^{2} < \exp \left( - (v^{1/\beta} /\beta )( (1 - \kappa) || \sigma_{D} ||^{-2\hat{S}_{D}/n(n-1)}  - \eta_{1} - \eta_{2} )\right).
$$
Take a point $p \in X \setminus D$ near $D$.\
Assume that $\sigma_{\tilde{s}}(p)=0$ for $\tilde{s} \in \mathbb{D}^{h(l,m,\beta)}$.\
Then, an element $s \in \mathbb{D}^{h(l,m,\beta)}$ satisfying the inequality above has to satisfy

\begin{eqnarray}
\label{region}
&&\nonumber || \tau \sum_{i = 1}^{h(l,m,\beta)} ( s_{i} - \tilde{s}_{i}) \sigma_{i}(p) \otimes \sigma_{D}(p)^{-\beta} ||^{2}\\
&<& \exp \left( - (v^{1/\beta} /\beta) ( (1 - \kappa) || \sigma_{D} ||^{-2\hat{S}_{D}/n(n-1)}  - \eta_{1} - \eta_{2} )\right).
\end{eqnarray}

By considering a suitable unitary transformation $u =(u_{i,j}) \in U(h(l,m,\beta))$, we can write as $\sum_{i = 1}^{h(l,m,\beta)} ( s_{i} - \tilde{s}_{i}) \sigma_{i}(p) = (\sum_{i,j = 1}^{h(l,m,\beta)} u_{i,j} ( s_{i} - \tilde{s}_{i})) \tilde{\sigma}(p)$ for some holomorphic section $\tilde{\sigma} \in H^{0}(K_{X}^{-l} \otimes L_{X}^{m + \beta})$ such that $\tilde{\sigma}(p) \neq 0$ and have
\begin{equation}
\label{region}
\left| \sum_{i,j = 1}^{h(l,m,\beta)} u_{i,j} ( s_{i} - \tilde{s}_{i}) \right|^{2} < \frac{ \exp \left( -( v^{1/\beta} /\beta) ( (1 - \kappa) || \sigma_{D} ||^{-2\hat{S}_{D}/n(n-1)}  - \eta_{1} - \eta_{2} )\right)}{ \tau^2|| \tilde{\sigma} \otimes \sigma_{D}^{-\beta}(p) ||^{2}}.
\end{equation}
Then, we have the following estimate
$$
\int \partial \Theta(t) \wedge \overline{\partial} \Theta(t) d\mu (s) = \exp \left( O\left(- || \sigma_{D} ||^{-2\hat{S}_{D}/n(n-1)} \right)\right).
$$
Next, we consider the term
$$
\int \partial G_{v}^{\beta}(\beta b_{s}) \wedge \overline{\partial} G_{v}^{\beta}(\beta b_{s}) d\mu (s)
$$
which appears in $\overline{\omega}$.\
From the inequality (\ref{inequality 1}), we have
$$
v^{-1/\beta} \beta b_{s} \approx G_{v}^{\beta}(\beta b_{s}) < (1 - \kappa) \Theta(t) + \eta_{1} + \eta_{2}.\
$$
Thus, the following inequality
$$
|| \sigma_{s} ||^{-2} < \exp \left( (v^{1/\beta} /\beta) ( (1 - \kappa) || \sigma_{D} ||^{-2\hat{S}_{D}/n(n-1)}  + \eta_{1} + \eta_{2} )\right)
$$
holds.\
Thus, we can estimate as follows
\begin{eqnarray}\nonumber
&&\int \partial G_{v}^{\beta}(\beta b_{s}) \wedge \overline{\partial} G_{v}^{\beta}(\beta b_{s}) d\mu (s)\\
&\leq& \exp \left( 2 v^{1/\beta} ( \eta_{1} +  \eta_{2} )/\beta \right)/\tau^2|| \tilde{\sigma} \otimes \sigma_{D}^{-\beta} ||^2. \label{gg}
\end{eqnarray}
By the definition of $\overline{\omega}$, we obtain
$$
\overline{\omega} \approx \left(1 - \exp \left( - || \sigma_{D} ||^{-2\hat{S}_{D}/n(n-1)} \right) \right) \omega_{0} + O(|| \sigma_{D} ||^{2 \beta} )
$$
near $D$.\
\end{proof}

{\it Proof of Theorem \ref{asc and eq}.}
Lemma \ref{eq} implies that the complete {\kah} manifold $(X \setminus D,\overline{\omega})$ is of asymptotically conical geometry.\
Thus, we will prove that the scalar curvature can be made small arbitrarily.\
To show  this, we take parameters $c,v,\tau$ and an integer $\beta$ so that
\begin{equation}
v^{1/\beta}c = k \log c,\hspace{5pt} \tau^2 = v, \hspace{5pt} \beta > \delta
\end{equation}
for a sufficiently large $k \in \mathbb{N}$ specified later.\

Firstly, from the construction of $\overline{\omega}$, weight norms of $S(\overline{\omega})$ away from $D\cup F_{0}$ can be made small arbitrarily by taking sufficiently small $\tau$.\
To show this, we study a function $f :\tau \to \overline{\omega}^{n}/\omega_{F_{0}}^{n}.$\
Note that this function is smooth and $f(0)=1$ and $\omega_{F_{0}}$ is Ricci-flat away from $D \cup F_{0}$.\
So, we have $S(\overline{\omega}) = O(\tau)$ away from $D \cup F_{0}$.\

Secondly, we study $S(\overline{\omega})$ near $F_{0}$ and away from $D$.\
We can write as
\begin{eqnarray*}
\overline{\omega}
&=& \int \left( \dol M_{c,v,\eta} \right) d \mu,
\end{eqnarray*}
where
\begin{eqnarray*}
M_{c,v,\eta}
&=& \frac{\partial M_{c,v,\eta}}{\partial t_{2}}  ( \gamma_{v}^{\beta} + \kappa \omega_{0}) + \frac{\partial M_{c,v,\eta}}{\partial t_{3}} \dol (t + \varphi)\\
&\hspace{15pt}+& \left[ 
\begin{array}{cc}
\partial \tilde{G}_{v}^{\beta}(b_{s})& \partial (t + \varphi)
\end{array} 
\right]
\left[ 
\begin{array}{c}
\frac{\partial^{2} M_{c,v,\eta}}{\partial t_{i} \partial t_{j}}
\end{array} 
\right]
\left[ 
\begin{array}{cc}
\overline{\partial} \tilde{G}_{v}^{\beta}(b_{s}) & \overline{\partial} (t + \varphi)
\end{array} 
\right]^{t}.
\end{eqnarray*}
On this region, we consider a sufficiently small neighborhood of $F_0$ by taking a sufficiently large parameter $c$.\
So, it is enough to consider the region defined by the following inequality
$$
G_{v}^{\beta}(b_{s}) + \kappa \Theta(t) - \eta_{2} > \max \{ \Theta(t) + \eta_{1} , t + \varphi + c + \eta_{3} \}.
$$
In addition, since we are considering the region away from $D$, by taking a sufficiently large parameter $c$, the inequality above can be rewritten as follows :
\begin{equation}
\label{near F}
G_{v}^{\beta}(b_{s}) + \kappa \Theta(t) - ( t + \varphi + c)  > \eta_{2} + \eta_{3}.
\end{equation}
So, we have
$$
|\sum_{i}^{h(l,m,\beta)} s_{i}|^2 < \exp \left( -(v^{1/\beta}/\beta) (t + \varphi + c - \kappa \Theta(t) + \eta_{2} + \eta_{3}) \right)/\tau^2 = O(c^{\beta-1}).
$$
Recall the definition of $\gamma_{v}^{\beta}$ in \cite{Aoi2} and the relation of the parameters $c,v$ :
$$
c v^{1/\beta} = k \log c
$$
So, the inequality above (\ref{near F}) implies that
$$
|| \sigma_{F_s} ||^{2} \leq v^{ k/ \beta}.
$$
Thus, we don't have to consider the case that $S(\omega_{F}) = O(1)$ and we have
\begin{equation}
\label{equivalence}
\overline{\omega} = \int \gamma_{v}^{\beta} d\mu(s) + \kappa \omega_{0} \approx v^{-1/\beta} \dol b + \kappa \omega_{0}.
\end{equation}
by taking a sufficiently large $c$.\
Since the Ricci form of $\dol (G_{v}^{\beta}(\beta b_{0}) + \kappa \Theta(t) )$ is bounded near $F_{0}$ and away from $D$, we can conclude that $S(\overline{\omega}) =O( v^{1/\beta} )$.\

Thirdly, we study $S(\overline{\omega})$ near $D$.\
Write
$$
\overline{\omega} = \omega_{0} + \dol \psi.
$$
By taking the trace with respect to the background metric $\omega_{0}$, we have
$$
\Delta_{\omega_{0}} \psi = {\rm tr}_{\omega_{0}} \overline{\omega} - n.
$$
To estimate
$$
S(\overline{\omega}) = S(\omega_{0} + \dol \psi),
$$
we study the right hand side in the equation above.\
Recall the construction of the complete {\kah} metric $\omega_{F_{s}}$.\
The bounded region where plurisubharmonic functions $\Theta(t), t + \varphi + c$ are glued is defined by following inequalities:
\begin{eqnarray*}
\tilde{G}_{v}^{\beta}(b_{s})  + \eta_{2} &<& \max \{\Theta(t) - \eta_{1}, ( t + \varphi +c) - \eta_{3} \},\\
| \Theta(t) - ( t + \varphi +c) | &<& \eta_{1} + \eta_{3}.
\end{eqnarray*}
In addition, $\omega_{F_{s}}$ is written as
\begin{eqnarray*}
\omega_{F_{0}}
&=& \frac{\partial M_{c,v,\eta}}{\partial t_{1}} \omega_{0} + \frac{\partial M_{c,v,\eta}}{\partial t_{3}} \dol (t + \varphi)\\
&\hspace{15pt}+& \left[ 
\begin{array}{cc}
\partial \Theta(t) & \partial (t + \varphi)
\end{array} 
\right]
\left[ 
\begin{array}{c}
\frac{\partial^{2} M_{c,v,\eta}}{\partial t_{i} \partial t_{j}}
\end{array} 
\right]
\left[ 
\begin{array}{cc}
\overline{\partial} \Theta(t) & \overline{\partial} (t + \varphi)
\end{array} 
\right]^{t}.
\end{eqnarray*}
Recall that $\eta_{1} + \eta_{3} = O(c)$.\
So, the inequality
\begin{eqnarray*}
| \Theta(t) - ( t + \varphi + c) | &<& \eta_{1} + \eta_{3}
\end{eqnarray*}
implies the following equivalence between complete {\kah} metrics:
$$
\frac{\partial M_{c,v,\eta}}{\partial t_{1}} \omega_{0} < \omega_{F_{s}} < 2 \omega_{0}
$$
on the region above.\
Next, we consider the region contained in the other region defined by
$$
v^{-1/\beta} \beta b_{s} \approx G_{v}^{\beta}(\beta b_{s}) > (1 - \kappa) \Theta(t) - \eta_{1} - \eta_{2}
$$
In order to estimate ${\rm tr}_{\omega_{0}} \overline{\omega} - n$, it is enough to estimate the following terms
$$
c^{-1} \int \partial \Theta(t) \wedge \overline{\partial} \Theta(t) d\mu (s) , \hspace{7pt} c^{-1} \int \partial G_{v}^{\beta}(\beta b_{s}) \wedge \overline{\partial} G_{v}^{\beta}(\beta b_{s}) d\mu (s).
$$
Since $c \leq \Theta(t)$ on this region, the first term can be estimated as follows
\begin{eqnarray*}
&& c^{-1} || \sigma_{D} ||^{-4\hat{S}_{D}/n(n-1)} \exp \left( - v^{1/\beta} (1 - \kappa) || \sigma_{D} ||^{-2\hat{S}_{D}/n(n-1)} /\beta \right)/ \tau^2||\tilde{\sigma} \otimes \sigma_{D}^{-\beta} ||^{2}\\
&=& O( c^{ 1 -  (1 - \kappa)k/\beta - n(n-1)\beta/\hat{S}_{D} +  \beta } (\log c)^{\beta})
\end{eqnarray*}
for parameters $\tau^2 = v, c v^{1/\beta} = k \log c$.\
From the estimate (\ref{gg}), the second term can be estimated as follows
\begin{eqnarray*}
&&c^{-1} \int \partial G_{v}^{\beta}(\beta b_{s}) \wedge \overline{\partial} G_{v}^{\beta}(\beta b_{s}) d\mu (s)\\
&\leq& \exp \left( 2 v^{1/\beta} ( \eta_{1} +  \eta_{2} )/\beta \right)/\tau^2|| \tilde{\sigma} \otimes \sigma_{D}^{-\beta} ||^2\\
&\leq& O(c^{-1 + 2 (a_{1} + a_{2})k/\beta + \beta - n(n-1)\beta/\hat{S}_{D} } )
\end{eqnarray*}
Recall the relation between parameters (see Claim 2 in Section 4 of \cite{Aoi2}) :
$$
1 - \kappa + \kappa a_1 - a_2 = 0.
$$
Since the choice of $a_{i} \in (0,1)$ is independent of $\beta,k$, we can choose sufficiently small $a_{i}$ and $\kappa$ which is sufficiently close to 1.\
Thus, we can make the following terms :
\begin{eqnarray*}
&& 1 -  (1 - \kappa)k/\beta - n(n-1)\beta/\hat{S}_{D} +  \beta\\
&& -1 + 2 (a_{1} + a_{2})k/\beta + \beta - n(n-1)\beta/\hat{S}_{D}
\end{eqnarray*}
negative by taking sufficiently large $\beta$ and $k$.\
Thus, we can estimate $\Delta_{\omega_{0}} \psi = {\rm tr}_{\omega_{0}} \overline{\omega} - n$ near $D$.\
From the equivalence (\ref{equivalence}), we obtain the following estimate near $F_0$ and away from $D$ :
$$
{\rm tr}_{\omega_{0}} \overline{\omega} - n = O(v^{-1/\beta}).
$$
For any weight $\epsilon \in (4, 2n)$, we have the following inequality
$$
\Delta_{\omega_{0}} \tilde{C} \rho^{-\epsilon+2} < - C \rho^{-\epsilon} < \Delta_{\omega_{0}} v^{1/\beta} \psi < C \rho^{-\epsilon} < - \Delta_{\omega_{0}} \tilde{C} \rho^{-\epsilon+2}
$$
on $X \setminus D$ for some constants $C,\tilde{C}>0$ depending only on $\epsilon$ and $n$.\
Here $\rho = || \sigma_{D} ||^{- \hat{S}_{D}/n(n-1)}$ is the barrier function defined in Section 5 of \cite{Aoi1} (see \cite{BK}).\
Thus, the maximum principle tells us that there is the following $C^{0}_{\epsilon - 4}$-estimate of $\psi$ :
\begin{equation}
\label{psi e}
|| \psi ||_{C^{0}_{\epsilon - 4}} \leq C v^{-1/\beta}.
\end{equation}
Recall the linearization of the scalar curvature operator
\begin{equation}
\label{linearization}
S(\overline{\omega}) = S(\omega_{0}) + L_{\omega_{0}} (\psi) + Q_{\omega_{0}} (\psi).
\end{equation}
In addition, the term $Q_{\omega_{0}} (\psi)$ can be written as
$$
Q_{\omega_{0}} (\psi) = (L_{\omega_{0} + s \dol \psi} - L_{\omega_{0}})(\psi)
$$
for some $s \in [0,1]$ (see \cite{Sz1} or \cite{Aoi1}).\
Choose $$\epsilon > \delta + 2.$$\
In this case, we can consider that $c \leq \Theta(t) \approx r^{2}$.\

Recall the interior Schauder estimate :
$$
|| \psi ||_{C^{4,\alpha}_{\epsilon - 4}} \leq C(\omega_{0}) ( || {\rm tr}_{\omega_{0}} \overline{\omega} - n ||_{C^{2,\alpha}_{\epsilon - 2}} + || \psi ||_{C^{0}_{\epsilon - 4}}  ).
$$
Here, $C(\omega_{0})$ is a positive constant depending only on $\omega_{0}$.\
The previous estimate (\ref{psi e}) implies that $|| \psi ||_{C^{4,\alpha}_{\epsilon - 4}} = O(v^{-1/\beta})$.\
Then, the equality (\ref{linearization}) implies that the norm of scalar curvature of {\it weight $\delta$} is estimated from above by $c^{(\delta - \epsilon)/2 + 1 } ( \log c)^{-1}$.\
In these settings of parameters, we show finally that the scalar curvature on the region defined by
$$
t + \varphi + c - \eta_{3} > \max \{ \Theta(t) + \eta_{1} , \tilde{G}_{v}^{\beta}(b_{s}) + \eta_{2} \}
$$
can be estimated in the sense of weighted norms.\
It follows from the first discussion that $S(\overline{\omega}) = O(\tau)$ on the region above.\
Then, we have
$$
S(\overline{\omega}) = O\left( \tau \right) = O\left( c^{-\beta/2} (k \log c )^{\beta/2} \right).
$$
Recall that $\Theta(t) \approx r^{2} \approx c$ on this region.\
So, we can estimate the $C^{k,\alpha}$-norm of the function $S(\overline{\omega})(r^2 + 1)^{\delta/2} \approx S(\overline{\omega})c^{\delta/2}$ in the definition of the weighted norm (see \cite{Aoi1}) on this region.\
More precisely, the choice of $\beta$ :
$$
\beta > \delta
$$
implies that we can estimate the $\delta$-weighted norm of the scalar curvature $S(\overline{\omega})$ on the region above.\
Therefore, from the discussion above, we can conclude that the weight norm of $S(\omega_{c,v,\eta})$ can be made small arbitrarily by taking a sufficiently large parameter $c$ (equivalently, sufficiently small parameters $v,\tau$).\
In addition, from the linearization of scalar curvatures, the scalar curvature $S(\overline{\omega})$ decays just like $S(\omega_{0})$.\
Thus, we finish the proof of Theorem \ref{asc and eq}.\
\sq

\begin{rem}
If $\theta_{D}$ is cscK, Theorem \ref{scalar curvature decay} implies that we have
$$
S(\overline{\omega}) = O(|| \sigma_{D} ||^{2+2\hat{S}_{D}/n(n-1)}) = O(r^{-2-2n(n-1)/\hat{S}_{D}})
$$
near $D$ (see \cite{Aoi1}).\
\end{rem}

\begin{rem}
Recall that we choose a parameter $v>0$ so that the inequality $(|| \sigma_{F} ||^{2\beta} + v)^{2/\beta} < || \sigma_{F}||^{4am/l}$ holds on the region defined by
\begin{eqnarray*}
 \Theta(t) + \eta_{1} &<& \max \{\tilde{G}_{v}^{\beta}(b) - \eta_{2}, ( t + \varphi +c) - \eta_{3} \},\\
| \tilde{G}_{v}^{\beta}(b) - ( t + \varphi +c) | &<& \eta_{2} + \eta_{3}
\end{eqnarray*}
in \cite{Aoi2}.\
Note that $G_{v}^{\beta}(\beta b) \approx \beta v^{-1/\beta} b$ for sufficiently large $b > 0$.\
The choice of parameters $cv^{1/\beta}= k \log c$ in the previous theorem implies that we have $||\sigma_{F}||^{-2\beta} \approx v^{k}$.\
Therefore, we can choose a suitable parameter $v> 0$ so that $(|| \sigma_{F} ||^{2\beta} + v)^{2/\beta} < || \sigma_{F}||^{4am/l}$ without contradiction (see Remark 2.5 in \cite{Aoi2}).\
\end{rem}

\section{
Proof of Theorem \ref{complete scalar-flat kah}}
\label{sec:11}

After this, all weighted Banach spaces $C^{k,\alpha}_{\delta} = C^{k,\alpha}_{\delta}(X\setminus D)$ are defined by the fixed {\kah} metric $\omega_{0}$.\
In Theorem \ref{complete scalar-flat kah}, we assume that
$$
0 < 3\hat{S}_{D} < n(n-1)
$$
and we choose a weight $\delta$ so that
\begin{equation}
\label{weight}
8 < \delta < \min \{ 2n, 2 + 2n(n-1)/\hat{S}_{D} \}
\end{equation}
and a function
$$
\phi \mathcal{D}_{\overline{\omega}}^{*}\mathcal{D}_{\overline{\omega}} \phi
$$
is integrable for $\phi \in C^{4,\alpha}_{\delta - 4}$ with respect to the volume form $\overline{\omega}^{n}$.\
In addition, we may assume that the integer $a(n)$ in Theorem \ref{small scalar curvature} satisfies
$$
12/a(n) <  \delta - 8 < \min \{ 2n - 8, 2n(n-1)/\hat{S}_{D} - 6 \}.
$$

\subsection{Condition \ref{condition A} and Condition \ref{condition B}}

In this subsection, we show that Condition \ref{condition A} and Condition \ref{condition B} (see Introduction of this paper or \cite{Aoi1}) hold with respect to the complete {\kah} metric $\overline{\omega}$ obtained in the previous section.\

In order to find the constant $\hat{K}$ in Condition \ref{condition A}, we use the resonance theorem (see \cite[p.69]{Yosida}).\
\begin{thm}[the resonance theorem]
\label{resonance}
Let $\{ T_{a} \hspace{3pt} | \hspace{3pt} a \in A \}$ be a family of bounded linear operators defined on a Banach space $\mathcal{X}$ into a normed linear space $\mathcal{Y}$.\
Then, the boundedness of $\{ || T_{a} x || \hspace{3pt} | \hspace{3pt} a \in A \}$ for each $x \in \mathcal{X}$ implies the boundedness of $\{ || T_{a} || \hspace{3pt} | \hspace{3pt} a \in A \}$.\
\end{thm}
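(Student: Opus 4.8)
The plan is to deduce this statement (the uniform boundedness principle, also known as the Banach--Steinhaus theorem) from the Baire category theorem, which is applicable precisely because $\mathcal{X}$ is a complete metric space. The idea is to exhibit $\mathcal{X}$ as a countable union of sublevel sets on which the \emph{entire} family $\{T_{a}\}_{a \in A}$ is uniformly bounded, to use completeness to locate one such set with nonempty interior, and then to propagate the bound on a ball to a bound on the operator norms by linearity.

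First I would fix, for each integer $k \geq 1$, the set
$$
E_{k} := \{\, x \in \mathcal{X} : \ \|T_{a} x\| \leq k \ \text{ for all } \ a \in A \,\} = \bigcap_{a \in A} \{\, x \in \mathcal{X} : \ \|T_{a} x\| \leq k \,\}.
$$
For each fixed $a$ the map $x \mapsto \|T_{a} x\|$ is continuous, being the norm of $\mathcal{Y}$ composed with the bounded (hence continuous) linear operator $T_{a}$; thus $\{\, x : \|T_{a} x\| \leq k \,\}$ is closed, and an arbitrary intersection of closed sets is closed, so every $E_{k}$ is closed. The pointwise-boundedness hypothesis says exactly that $\sup_{a \in A} \|T_{a} x\| < \infty$ for each $x \in \mathcal{X}$, hence each $x$ lies in some $E_{k}$, and therefore $\mathcal{X} = \bigcup_{k \geq 1} E_{k}$.

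Next I would invoke the Baire category theorem: since $\mathcal{X}$ is a nonempty complete metric space, it cannot be a countable union of nowhere dense closed sets, so some $E_{k_{0}}$ has nonempty interior, i.e.\ there are $x_{0} \in \mathcal{X}$ and $r > 0$ with $\{\, x : \|x - x_{0}\| \leq r \,\} \subseteq E_{k_{0}}$. Then, for every $z$ with $\|z\| \leq r$ and every $a \in A$, both $x_{0} + z$ and $x_{0}$ lie in $E_{k_{0}}$, so
$$
\|T_{a} z\| = \|T_{a}(x_{0} + z) - T_{a} x_{0}\| \leq \|T_{a}(x_{0} + z)\| + \|T_{a} x_{0}\| \leq 2 k_{0}.
$$
Applying this to $z = r w / \|w\|$ for an arbitrary nonzero $w \in \mathcal{X}$ gives $\|T_{a} w\| \leq (2 k_{0} / r)\|w\|$, whence $\sup_{a \in A} \|T_{a}\| \leq 2 k_{0} / r < \infty$, which is the assertion.

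I do not expect a genuine obstacle here; the whole content lies in the single application of the Baire category theorem, and the only points needing care are (i) that each $E_{k}$ is truly \emph{closed} — this is where one uses continuity of the individual operators $T_{a}$, not merely their linearity — and (ii) that $\mathcal{X}$ is \emph{complete}, which is the sole function of the Banach hypothesis (note that $\mathcal{Y}$ is allowed to be only normed). The reason to record this classical statement separately is its use in the following subsection, where it is applied to an appropriate family of bounded linear operators in order to extract, uniformly over the relevant metrics, the single constant $\hat{K}$ required by the estimate in Condition \ref{condition A} for the metric $\overline{\omega}$.
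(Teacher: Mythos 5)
Your proof is correct: it is the standard Baire-category (Banach--Steinhaus) argument, with the closedness of the sets $E_{k}$ and the completeness of $\mathcal{X}$ used exactly where they are needed. The paper itself gives no proof of this statement --- it simply quotes the resonance theorem from Yosida's book \cite[p.69]{Yosida} --- and your argument is essentially the classical one found in that reference, so there is nothing to reconcile.
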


Then, we obtain the following theorem which is the core of this paper :

\begin{thm}
\label{key theorem}
Take parameters $c,v,\tau$ so that $v^{1/\beta} c = k \log c, \tau^2 = v$.\
Assume that there is no nonzero holomorphic vector field on $X$ which vanishes on $D$.\
Then, there exists an uniform constant $K>0$ such that
$$
|| \mathcal{D}_{\overline{\omega}}^{*}\mathcal{D}_{\overline{\omega}} \phi ||_{C^{0,\alpha}_{\delta}} \geq K || \phi ||_{C^{4,\alpha}_{\delta - 4}}
$$
for any $c,v,\eta,\tau$ and $\phi \in C^{4,\alpha}_{\delta - 4}$.
\end{thm}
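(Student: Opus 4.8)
The plan is to argue by contradiction using the resonance theorem (Theorem \ref{resonance}) exactly as one establishes uniform invertibility bounds for families of elliptic operators degenerating to a model operator. Suppose no such uniform $K>0$ exists. Then there are sequences of parameters $c_j \to \infty$ (equivalently $v_j,\tau_j \to 0$ with $v_j^{1/\beta}c_j = k\log c_j$) and functions $\phi_j \in C^{4,\alpha}_{\delta-4}$ with $\|\phi_j\|_{C^{4,\alpha}_{\delta-4}} = 1$ but $\|\mathcal{D}_{\overline{\omega}_j}^{*}\mathcal{D}_{\overline{\omega}_j}\phi_j\|_{C^{0,\alpha}_{\delta}} \to 0$. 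The first step is to record that, by Theorem \ref{asc and eq}, $\overline{\omega}_j$ is uniformly equivalent to $\omega_0$ (with constant independent of $j$ by Lemma \ref{eq}) and $(X\setminus D,\overline{\omega}_j)$ is uniformly of asymptotically conical geometry, so all the weighted Banach space structures are comparable to the fixed one defined by $\omega_0$. The second step is a local elliptic estimate: on any fixed compact piece of $X\setminus D$, interior Schauder estimates for the fourth-order operator $\mathcal{D}_{\overline{\omega}_j}^{*}\mathcal{D}_{\overline{\omega}_j}$ give $C^{4,\alpha}$ bounds on $\phi_j$ in terms of $\|\mathcal{D}_{\overline{\omega}_j}^{*}\mathcal{D}_{\overline{\omega}_j}\phi_j\|_{C^{0,\alpha}}$ plus a $C^0$ term, so after passing to a subsequence $\phi_j \to \phi_\infty$ in $C^{4,\alpha}_{\mathrm{loc}}$, where $\mathcal{D}_{\overline{\omega}_\infty}^{*}\mathcal{D}_{\overline{\omega}_\infty}\phi_\infty = 0$ and $\overline{\omega}_\infty$ is the limiting metric (which by Lemma \ref{eq} agrees with $\omega_0$ near $D$ and is a genuine asymptotically conical scalar-flat-type background away from it).

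The third step is the key dichotomy, and it is where the no-holomorphic-vector-field hypothesis and the weight restriction $4<\delta<2n$ enter. The kernel of $\mathcal{D}_{\overline{\omega}_\infty}^{*}\mathcal{D}_{\overline{\omega}_\infty}$ on $C^{4,\alpha}_{\delta-4}$ consists of functions $\psi$ with $\mathcal{D}_{\overline{\omega}_\infty}\psi = 0$, i.e.\ $\nabla^{1,0}\psi$ is a holomorphic vector field on $X\setminus D$; by the weight decay $\delta-4>0$ and an argument as in \cite{Aoi1} (removable singularities across $D$, using $\delta<2n$) this vector field extends to a holomorphic vector field on $X$ vanishing on $D$, hence is zero, forcing $\psi$ constant, and the only constant in $C^{4,\alpha}_{\delta-4}$ is $0$. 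Therefore $\phi_\infty = 0$, i.e.\ $\phi_j \to 0$ in $C^{4,\alpha}_{\mathrm{loc}}$. The fourth step is to rule out mass escaping to infinity (toward $D$): since $\|\phi_j\|_{C^{4,\alpha}_{\delta-4}}=1$ and the local norms go to zero, there must be points $p_j \to D$ where the weighted norm of $\phi_j$ stays bounded below. One then rescales the asymptotically conical structure near $p_j$; because the family $\overline{\omega}_j$ is uniformly asymptotically conical (this is precisely the payoff of replacing $\omega_F$ by the average $\overline{\omega}$), the rescaled operators converge to the model operator $\mathcal{D}^{*}\mathcal{D}$ on the asymptotic cone, and a nonzero bounded solution of $\mathcal{D}^{*}\mathcal{D}u = 0$ with the right decay on the cone is excluded by the indicial root analysis (the absence of indicial roots of the model Laplacian in the range dictated by $\delta$, which is where $\delta\notin\{$indicial roots$\}$ is used and why $\delta<2n$ matters). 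This contradiction proves the theorem; applying the resonance theorem phrasing, the family $\{(\mathcal{D}_{\overline{\omega}}^{*}\mathcal{D}_{\overline{\omega}})^{-1}\}$ is uniformly bounded, which is the asserted inequality.

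I expect the main obstacle to be the fourth step: controlling the behavior near $D$ uniformly in the parameters. The delicate point is that $\overline{\omega}_j$ is only \emph{asymptotically} conical, with the rate of approach to the cone quantified by Lemma \ref{eq} as $O(\|\sigma_D\|^{2\beta})$, and one must ensure this rate together with the choice $\beta>\delta$ is strong enough that the lower-order (non-model) terms of $\mathcal{D}_{\overline{\omega}_j}^{*}\mathcal{D}_{\overline{\omega}_j}$ are negligible after the rescaling — uniformly as $c_j\to\infty$. This requires tracking how the parameters $v^{1/\beta}c = k\log c$, $\tau^2 = v$ interact with the weighted norms near $D$, much as in the \emph{Step 3} computation in the proof of Theorem \ref{asc and eq}; in particular one needs that the corrections $\eta_1,\eta_2 = O(c)$, $\eta_3 = O(1)$ do not spoil the uniformity. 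A secondary technical point is justifying the removable-singularity/extension statement for the kernel elements uniformly, i.e.\ confirming that the limiting background $\overline{\omega}_\infty$ genuinely satisfies the hypotheses under which Condition \ref{condition A} was verified in \cite{Aoi1}; since $\overline{\omega}_\infty$ coincides with $\omega_0$ near $D$ by Lemma \ref{eq}, this should follow directly from the arguments there, but it must be stated carefully so that the contradiction is with a theorem already in hand rather than with something requiring fresh proof.
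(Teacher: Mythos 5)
Your argument is the standard compactness/blow-up proof with a sequence $\phi_j$ of unit norm, and it is genuinely different from what the paper does; but as written it has real gaps, and the step you yourself flag as the ``main obstacle'' is precisely the one you do not carry out. First, your Step 1 claim that $\overline{\omega}_j$ is uniformly equivalent to $\omega_0$ with a constant independent of the parameters is not available: Lemma \ref{eq} controls $\overline{\omega}-\omega_0$ only near $D$, while near $F_0$ the paper's own computation gives $\overline{\omega} \approx v^{-1/\beta}\dol b + \kappa\omega_0$, so the upper equivalence constant blows up as $v \to 0$. Consequently the uniform interior Schauder estimates and the local $C^{4,\alpha}$ compactness of $\phi_j$ are not justified on all of $X\setminus D$, and you never address the possibility that the unit mass of $\phi_j$ concentrates in the shrinking region near $F_0$ where the operators $\mathcal{D}^*_{\overline{\omega}_j}\mathcal{D}_{\overline{\omega}_j}$ degenerate --- this, not escape toward $D$, is the degeneration specific to the family $\overline{\omega}_{c,v,\tau}$. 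Second, your Step 4 (ruling out concentration at infinity near $D$ by rescaling to the cone and invoking indicial roots, uniformly in the parameters) is only a plan; without it the contradiction is not reached, so the proposal is incomplete even granting Step 1.

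The paper avoids all of this by using the resonance theorem in an essential way, not as a closing flourish: it fixes a single $\phi \in C^{4,\alpha}_{\delta-4}$ and lets only the parameters $(\tau,v,c)\to(0,0,\infty)$ vary, so no blow-up analysis of a varying sequence $\phi_j$ is needed. If $\|\mathcal{D}^*_{\overline{\omega}}\mathcal{D}_{\overline{\omega}}\phi\|_{C^{0,\alpha}_{\delta}}\to 0$ for this fixed $\phi$, integration by parts gives $\int|\mathcal{D}_{\overline{\omega}}\phi|^2\,\overline{\omega}^n\to 0$; the degeneration near the hypersurfaces $F_s$ is handled not by uniform ellipticity but by the volume bound $\int_{\cup_s F_s}\overline{\omega}^n = O(v^{1-n/\beta})$ (this is where $\tau^2=v$, $v^{1/\beta}c=k\log c$ and large $\beta$ enter), and since $\mathcal{D}_{\overline{\omega}}\to\mathcal{D}_{\dol(t+\varphi)}$ one concludes $\nabla^{1,0}\phi$ is holomorphic off $D\cup F_0$, extends to $X$, vanishes on $D$ by the decay $\delta-4>0$, hence $\phi=0$, contradicting $\|\phi\|=1$. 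So the two routes differ at the core: your approach would, if completed, require the uniform asymptotically-conical rescaling and indicial-root machinery plus a separate treatment of the $F_0$ region, whereas the paper's fixed-$\phi$ argument trades all of that for the Banach--Steinhaus principle and a volume estimate. To salvage your proof you would need to supply the missing uniform estimates near $F_0$ and the full non-concentration argument near $D$; as it stands these are gaps.
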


\begin{proof}
We prove this theorem by using Theorem \ref{resonance}.\
So, for a fixed function $\phi \in C^{4,\alpha}_{\delta - 4}$, it is enough to show that the quantity
$$
\frac{|| \phi ||_{C^{4,\alpha}_{\delta - 4}}}{|| \mathcal{D}_{\overline{\omega}}^{*}\mathcal{D}_{\overline{\omega}} \phi ||_{C^{0,\alpha}_{\delta}}}
$$
has an upper bound depending only on $\phi$.\
We prove this by contradiction.\
Assume that there exists a sequence $(\tau, v , c) \to (0, 0, \infty)$ such that $|| \mathcal{D}_{\overline{\omega}}^{*}\mathcal{D}_{\overline{\omega}} \phi ||_{C^{0,\alpha}_{\delta}} \to 0$ for some $ \phi \in C^{4,\alpha}_{\delta - 4}$ with $|| \phi ||_{C^{4,\alpha}_{\delta - 4}} = 1$.\
By integration by parts, we have
\begin{eqnarray*}
\int_{X \setminus D} \phi \mathcal{D}_{\overline{\omega}}^{*}\mathcal{D}_{\overline{\omega}} \phi \overline{\omega}^{n}
&=& \int_{X \setminus D} | \mathcal{D}_{\overline{\omega}} \phi |^{2} \overline{\omega}^{n}.
\end{eqnarray*}
Recall that $\mathcal{D}_{\overline{\omega}} \to \mathcal{D}_{\dol(t + \varphi)}$ as $(\tau, v , c) \to (0, 0 . \infty)$.\
We show that
$$
\int_{X \setminus D} \phi \mathcal{D}_{\overline{\omega}}^{*}\mathcal{D}_{\overline{\omega}} \phi \overline{\omega}^{n} \to 0
$$
as $(\tau, v , c) \to (0, 0 , \infty)$.\
To see this, we study the volume of the subset $\cup_{s \in \mathbb{D}^{h(l,m+1)}} F_{s}$.\
For $p \in X \setminus D$ close to $F_{0}$, we can find $s \in \mathbb{D}^{h(l,m+1)}$ such that $\sigma_{s}(p)=0$.\
So, we have
$$
||\sigma_{0}(p)|| \leq ||\sigma_{i}(p)|| + \tau || \sum_{i}s_{i} \sigma_{i}(p) || \leq C\tau.
$$
On the other hand, $\overline{\omega} < v^{-1/\beta} \dol b_{0}$ near $F_{0}$.\
Thus, we have
$$
\int_{\cup_{s \in \mathbb{D}^{h(l,m+\beta)}} F_{s}} \overline{\omega}^{n} = O( \tau^2 v^{-n/\beta}) = O( v^{1-n/\beta}).
$$
It follows from the choice of $v>0$ in this theorem that the desired convergence above holds as $(\tau, v , c) \to (0, 0 , \infty)$ by taking sufficiently large $\beta$.\
Then, we obtain a holomorphic vector field
$$
\nabla^{1,0} \phi = g^{i,\overline{j}} \frac{\partial \phi}{\partial \overline{z}^{j}} \frac{\partial}{\partial z^{i}}
$$
on $X \setminus (D \cup F_{0})$.\
Here, we write $\dol (t + \varphi) = \sqrt{-1} g_{i,\overline{j}} dz^{i} \wedge d \overline{z}^{j}$.\
It follows from the definitions of $\phi$ and $g^{i,\overline{j}}$ that $\nabla^{1,0} \phi$ can be extended to $X$.\
The decay condition of $\phi$ and the assumption of holomorphic vector fields on $X$ imply that $\phi = 0$.\
This is contradiction and the resonance theorem (Theorem \ref{resonance}) implies that the inverse operator $ \mathcal{D}_{\overline{\omega}}^{*}\mathcal{D}_{\overline{\omega}}^{-1}$ has an uniform bound.\
\end{proof}

Recall the following relation
$$
L_{\overline{\omega}} = - \mathcal{D}^{*}_{\overline{\omega}}\mathcal{D}_{\overline{\omega}} + (\nabla^{1,0} \ast , \nabla^{0,1} S(\overline{\omega}))_{\overline{\omega}}.
$$
Thus, Theorem \ref{asc and eq} and Theorem \ref{key theorem} imply that Condition \ref{condition A} holds with respect to $\overline{\omega}$.\

\begin{thm}
\label{small small}
Take parameters so that $v^{1/\beta} c = k \log c$ and $\tau^2 = v$.\
Assume that $\theta_{D}$ is cscK and $\mathcal{D}_{\overline{\omega}}^{*}\mathcal{D}_{\overline{\omega}}: C^{4,\alpha}_{\delta - 4} \to C^{0,\alpha}_{\delta}$ is isomorphic.\
Then, we can make the norm of the linear operator $(\nabla^{1,0} \ast, \nabla^{0,1}S(\overline{\omega}))_{\overline{\omega}} = L_{\overline{\omega}} + \mathcal{D}_{\overline{\omega}}^{*}\mathcal{D}_{\overline{\omega}}$ small arbitrarily so that $L_{\overline{\omega}} : C^{4,\alpha}_{\delta - 4} \to C^{0,\alpha}_{\delta}$ is isomorphic.\
Moreover, we can find a constant $\hat{K} > 0$ such that
$$
|| L_{\overline{\omega}} \phi ||_{C^{0,\alpha}_{\delta}} \geq \hat{K} || \phi ||_{C^{4,\alpha}_{\delta - 4}}
$$
for any $c,v,\tau,\phi \in C^{4,\alpha}_{\delta - 4}$.\
\end{thm}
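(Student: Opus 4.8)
The plan is to treat $L_{\overline{\omega}}$ as a small perturbation of $-\mathcal{D}^{*}_{\overline{\omega}}\mathcal{D}_{\overline{\omega}}$, which by Theorem \ref{key theorem} is invertible with a lower bound $K>0$ that is \emph{independent} of $c,v,\eta,\tau$, and which by hypothesis is an isomorphism onto $C^{0,\alpha}_{\delta}$, so that $\|(\mathcal{D}^{*}_{\overline{\omega}}\mathcal{D}_{\overline{\omega}})^{-1}\|\le 1/K$ uniformly. The first step is to isolate the perturbation term. Writing $\dol(t+\varphi)=\sqrt{-1}g_{i\bar j}dz^{i}\wedge d\bar z^{j}$ and $\overline{\omega}=\sqrt{-1}\,\tilde g_{i\bar j}dz^{i}\wedge d\bar z^{j}$, the operator
\[
T\phi:=(\nabla^{1,0}\phi,\nabla^{0,1}S(\overline{\omega}))_{\overline{\omega}}=L_{\overline{\omega}}\phi+\mathcal{D}^{*}_{\overline{\omega}}\mathcal{D}_{\overline{\omega}}\phi
\]
is, up to raising and lowering indices with $\tilde g$, of the form $\tilde g^{i\bar j}(\partial_{k}S(\overline{\omega}))(\partial_{\bar\ell}\phi)$; in particular it is \emph{first} order in $\phi$, and its coefficient carries exactly one derivative of the scalar curvature. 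By Theorem \ref{asc and eq} the metric $\overline{\omega}$ is uniformly equivalent to $\omega_{0}$ and of asymptotically conical geometry, so the components $\tilde g^{i\bar j}$, their H\"older seminorms, and the associated Christoffel symbols have weighted norms, measured with respect to $\omega_{0}$, that are bounded uniformly in $c,v,\eta,\tau$; moreover, since $\theta_{D}$ is cscK with $0<\hat S_{D}<n(n-1)$, the same theorem lets us make $\|S(\overline{\omega})\|_{C^{1,\alpha}_{\delta}}$ as small as we please by enlarging $c$ (equivalently shrinking $v,\tau$).

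Next I would carry out the weighted estimate of $T$. For $\phi\in C^{4,\alpha}_{\delta-4}$ the first derivatives $\partial_{\bar\ell}\phi$ lie in $C^{3,\alpha}_{\delta-3}$ with norm $\lesssim\|\phi\|_{C^{4,\alpha}_{\delta-4}}$, while $\partial_{k}S(\overline{\omega})$ lies in $C^{0,\alpha}_{\delta+1}$ with norm $\lesssim\|S(\overline{\omega})\|_{C^{1,\alpha}_{\delta}}$. Since $\delta>8$, in particular $2\delta-2>\delta$, the multiplication property of weighted H\"older spaces on $(X\setminus D,\omega_{0})$ together with the continuous inclusion $C^{0,\alpha}_{2\delta-2}\hookrightarrow C^{0,\alpha}_{\delta}$ gives
\[
\|T\phi\|_{C^{0,\alpha}_{\delta}}\le C'\,\|S(\overline{\omega})\|_{C^{1,\alpha}_{\delta}}\,\|\phi\|_{C^{4,\alpha}_{\delta-4}},
\]
with $C'$ depending only on $\delta,n,\omega_{0}$ and the uniform bounds on $\tilde g^{i\bar j}$ recorded above. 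Hence $\|T\|_{C^{4,\alpha}_{\delta-4}\to C^{0,\alpha}_{\delta}}\to 0$ along any sequence $(\tau,v,c)\to(0,0,\infty)$ with $v^{1/\beta}c=k\log c$, $\tau^{2}=v$.

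Finally I would close the argument by a Neumann series. Fix $c$ large enough that $C'\|S(\overline{\omega})\|_{C^{1,\alpha}_{\delta}}\le K/2$; this is possible by the previous step, and it still leaves $\|S(\overline{\omega})\|_{C^{0,\alpha}_{\delta}}$ free to be taken as small as desired, so Condition \ref{condition B} remains attainable. Then $\|(\mathcal{D}^{*}_{\overline{\omega}}\mathcal{D}_{\overline{\omega}})^{-1}T\|\le 1/2$, so $I-(\mathcal{D}^{*}_{\overline{\omega}}\mathcal{D}_{\overline{\omega}})^{-1}T$ is invertible with inverse of norm $\le 2$, and therefore
\[
L_{\overline{\omega}}=-\mathcal{D}^{*}_{\overline{\omega}}\mathcal{D}_{\overline{\omega}}\bigl(I-(\mathcal{D}^{*}_{\overline{\omega}}\mathcal{D}_{\overline{\omega}})^{-1}T\bigr)\colon C^{4,\alpha}_{\delta-4}\to C^{0,\alpha}_{\delta}
\]
is an isomorphism with $\|L_{\overline{\omega}}^{-1}\|\le 2/K$. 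Setting $\hat K:=K/2>0$ yields $\|L_{\overline{\omega}}\phi\|_{C^{0,\alpha}_{\delta}}\ge\hat K\|\phi\|_{C^{4,\alpha}_{\delta-4}}$ for all parameters with $c$ sufficiently large, with $\hat K$ independent of $c,v,\eta,\tau$; since only the regime $c\to\infty$ enters the subsequent fixed-point argument, the restriction "$c$ large" is harmless, and the three assertions of the theorem follow.

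I expect the only genuine technical content to sit in the weighted estimate of $T$: one must verify that the decay weights combine to land strictly inside $C^{0,\alpha}_{\delta}$, and, more delicately, extract from Theorem \ref{asc and eq} that the coefficients of $\overline{\omega}$, their H\"older seminorms, and the connection terms are controlled \emph{uniformly} in the parameters, so that $C'$ — and hence $\hat K$ — does not degenerate as $(\tau,v,c)\to(0,0,\infty)$. Once Theorem \ref{key theorem} is granted, everything else is a soft Neumann-series perturbation.
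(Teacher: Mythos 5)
Your proposal is correct and follows essentially the same route the paper intends: the paper itself justifies this theorem only by the decomposition $L_{\overline{\omega}} = -\mathcal{D}^{*}_{\overline{\omega}}\mathcal{D}_{\overline{\omega}} + (\nabla^{1,0}\ast,\nabla^{0,1}S(\overline{\omega}))_{\overline{\omega}}$ together with the uniform bound of Theorem \ref{key theorem} and the smallness of $S(\overline{\omega})$ in weighted norm from Theorem \ref{asc and eq}, which is exactly the perturbation you carry out. Your Neumann-series argument and weight bookkeeping ($2\delta-2>\delta$) simply make explicit what the paper leaves implicit, with the same resulting constant $\hat{K}$ uniform in the parameters.
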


We need the following lemma:
\begin{lemma}
\label{contraction lemma 2}
Assume that $n \geq 5$ and
$$
3 \hat{S}_{D} < n(n-1).
$$
Then, for $\delta > 8$, there exists $c_{0} > 0$ independent of $\overline{\omega}$ such that if $||\phi||_{C^{4,\alpha}_{\delta - 4}(X\setminus D)} \leq c_{0}$, we have
$$
||L_{\overline{\omega}_{\phi}} - L_{\overline{\omega}}||_{C^{4,\alpha}_{\delta - 4} \to C^{0,\alpha}_{\delta}} \leq \hat{K}/2
$$
and $\overline{\omega}_{\phi} = \overline{\omega} + \dol \phi$ is positive.\
\end{lemma}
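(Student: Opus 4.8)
Here is how I would prove the lemma.

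\medskip

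\emph{Set‑up and positivity.} Keep the weight $\delta$ of Section \ref{sec:11}, so that $8<\delta<\min\{2n,\,2+2n(n-1)/\hat{S}_{D}\}$ — a non‑empty range precisely because $n\geq5$ forces $2n>8$ and $3\hat{S}_{D}<n(n-1)$ forces $2+2n(n-1)/\hat{S}_{D}>8$, which is exactly the content of our two hypotheses. By Theorem \ref{asc and eq} there is a constant $C>0$, \emph{independent of the parameters $c,v,\eta,\tau$}, with $C^{-1}\omega_{0}<\overline{\omega}<C\omega_{0}$, and the asymptotically conical geometry of $\overline{\omega}$ is controlled relative to $\omega_{0}$ uniformly in these parameters; in particular $\overline{\omega}$ is smooth on $X\setminus D$, $\overline{\omega}-\omega_{0}=\dol\psi_{0}$ with $\psi_{0}$ uniformly small in a weighted $C^{4,\alpha}$‑norm, and $S(\overline{\omega})=O(r^{-2})$ with uniformly bounded weighted Hölder norms. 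By Theorem \ref{small small} the constant $\hat{K}$ is likewise uniform in $c,v,\eta,\tau$. Every constant produced below will depend only on $n,\delta,C,\hat{K}$, hence be independent of $\overline{\omega}$. For positivity: if $\|\phi\|_{C^{4,\alpha}_{\delta-4}}\leq c_{0}$ then $\dol\phi\in C^{2,\alpha}_{\delta-2}$ has norm $\leq c_{0}$; since $\delta-2>0$ the underlying weight function is $\geq1$, so $\dol\phi$ is pointwise dominated by $C'c_{0}\,\omega_{0}$ for a dimensional $C'$, and choosing $c_{0}\leq(2CC')^{-1}$ gives $\overline{\omega}_{\phi}=\overline{\omega}+\dol\phi>\tfrac{1}{2C}\omega_{0}>0$, still uniformly equivalent to $\omega_{0}$.

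\medskip

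\emph{Reducing to the second variation.} The formula $L_{\overline{\omega}_{\phi}}=-\mathcal{D}^{*}_{\overline{\omega}_{\phi}}\mathcal{D}_{\overline{\omega}_{\phi}}+(\nabla^{1,0}\ast,\nabla^{0,1}S(\overline{\omega}_{\phi}))_{\overline{\omega}_{\phi}}$ is not directly usable, since for $\phi\in C^{4,\alpha}$ the function $S(\overline{\omega}_{\phi})$ is only $C^{0,\alpha}$ and $\nabla S(\overline{\omega}_{\phi})$ would force $\phi\in C^{5,\alpha}$. Instead I would read $L_{\overline{\omega}_{\phi}}$ as the Fréchet derivative at $\phi$ of the map $\Phi(\chi):=S(\overline{\omega}+\dol\chi)-S(\overline{\omega})$. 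Pointwise $\Phi(\chi)$ is a rational expression in the $4$‑jet of $\chi$ whose denominator is a power of $\det(\overline{g}+\partial\overline{\partial}\chi)$, bounded away from $0$ uniformly once $\|\chi\|_{C^{4,\alpha}_{\delta-4}}$ is small; since $\mathcal{D}^{*}_{\overline{\omega}}\mathcal{D}_{\overline{\omega}}:C^{4,\alpha}_{\delta-4}\to C^{0,\alpha}_{\delta}$ is bounded (a lower‑order decaying‑coefficient perturbation of $\mathcal{D}^{*}_{\omega_{0}}\mathcal{D}_{\omega_{0}}$, cf. \cite{Aoi1}) and $\nabla S(\overline{\omega})=O(r^{-3})$, the map $\Phi$ is well defined and smooth from a neighbourhood of $0$ in $C^{4,\alpha}_{\delta-4}$ into $C^{0,\alpha}_{\delta}$, vanishing at $0$, with $D\Phi|_{\chi}=L_{\overline{\omega}+\dol\chi}$. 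Hence
$$
L_{\overline{\omega}_{\phi}}-L_{\overline{\omega}}=D\Phi|_{\phi}-D\Phi|_{0}=\int_{0}^{1}D^{2}\Phi|_{s\phi}(\phi,\cdot)\,ds,
$$
so it suffices to produce a uniform $C_{*}$ with $\|D^{2}\Phi|_{s\phi}\|_{C^{4,\alpha}_{\delta-4}\times C^{4,\alpha}_{\delta-4}\to C^{0,\alpha}_{\delta}}\leq C_{*}$ for all $s\in[0,1]$ and all admissible $\overline{\omega}$; then $\|L_{\overline{\omega}_{\phi}}-L_{\overline{\omega}}\|\leq C_{*}\|\phi\|_{C^{4,\alpha}_{\delta-4}}\leq C_{*}c_{0}$, and imposing also $c_{0}\leq\hat{K}/(2C_{*})$ finishes the proof.

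\medskip

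\emph{The uniform second‑variation bound.} Since $\log\det(\overline{g}+\partial\overline{\partial}\chi)$ depends on $\chi$ only through its complex Hessian, one writes $S(\overline{\omega}+\dol\chi)-S(\overline{\omega})$ schematically as a finite sum of monomials: a coefficient — rational in the Hessian of $\chi$ and polynomial in the $\leq2$‑jet of $\overline{g}$ — times a product of derivatives $\partial^{a}\chi$ with $a\in\{2,3,4\}$, at least one such factor present, and with the property that any monomial having only $\partial^{2}\chi$‑factors carries a $\partial^{2}\overline{g}$ (or $(\partial\overline{g})^{2}$) in its coefficient. Differentiating twice in the directions $\phi,\psi$ and putting $\chi=s\phi$ then exhibits $(L_{\overline{\omega}_{\phi}}-L_{\overline{\omega}})\psi$ as a finite sum of terms
$$
c\cdot(\partial^{a_{1}}\phi)\cdots(\partial^{a_{p}}\phi)\cdot(\partial^{b}\psi),\qquad p\geq1,\ a_{i},b\in\{2,3,4\},
$$
possibly multiplied by further decaying factors $\partial^{j}(s\phi)$ with $j\in\{2,3,4\}$, where $c$ is bounded in $C^{0,\alpha}$ uniformly in $\overline{\omega}$ (the Hessian of $s\phi$ is small, $\det(\overline{g}+\partial\overline{\partial}s\phi)$ is bounded below, and the $\leq2$‑jet of $\overline{g}$ is uniformly controlled, its first and second derivatives in fact decaying like $r^{-1}$ and $r^{-2}$). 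The decisive point is that no term in this linearized form needs more than four classical derivatives of $\phi$ or of $\psi$, so the expression is legitimate for $\phi,\psi\in C^{4,\alpha}$. A short weight count closes the estimate: using $\partial^{a}\phi\in C^{0,\alpha}_{\delta-4+a}$, $\partial^{b}\psi\in C^{0,\alpha}_{\delta-4+b}$, the fact that a factor $\partial^{k}\overline{g}$ contributes weight $k$, and $C^{0,\alpha}_{\delta_{1}}\cdot C^{0,\alpha}_{\delta_{2}}\hookrightarrow C^{0,\alpha}_{\delta_{1}+\delta_{2}}\hookrightarrow C^{0,\alpha}_{\delta}$ whenever $\delta_{1}+\delta_{2}\geq\delta$ (the last inclusion because the weight function is $\geq1$), one checks that every monomial has combined weight $\geq2\delta-2\geq\delta$ — with further slack for terms carrying extra $\partial^{j}(s\phi)$‑factors — and hence lies in $C^{0,\alpha}_{\delta}$ with norm bounded by a constant times $\|\phi\|_{C^{4,\alpha}_{\delta-4}}\|\psi\|_{C^{4,\alpha}_{\delta-4}}$ (the spare $\phi$‑factors contributing only powers of $\|\phi\|\leq c_{0}\leq1$). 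Summing the finitely many monomials yields the desired uniform $C_{*}$.

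\medskip

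\emph{Expected main obstacle.} The genuinely load‑bearing step is the last one: verifying that the coefficients occurring in the second variation are bounded \emph{uniformly over the whole family} $\{\overline{\omega}(c,v,\eta,\tau)\}$, and that passing to the linearized form truly avoids any loss of derivatives. Both are exactly what the uniform equivalence to $\omega_{0}$ and the uniform asymptotically conical control provided by Theorem \ref{asc and eq} are designed to supply, so I do not anticipate a serious difficulty beyond careful bookkeeping.
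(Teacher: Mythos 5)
There is a genuine gap, and it sits exactly at the step you yourself flag as load-bearing. Your argument rests on the claim that the coefficients in the second variation are uniformly controlled because ``the $\leq 2$-jet of $\overline{g}$ is uniformly controlled, its first and second derivatives in fact decaying like $r^{-1}$ and $r^{-2}$'' and $\det(\overline{g}+\partial\overline{\partial}s\phi)$ is bounded below, uniformly over the family $\overline{\omega}(c,v,\eta,\tau)$. This is precisely what is \emph{not} available here: on the region where the glued potential equals $t+\varphi+c$, the metric involves the solution $\varphi$ of the degenerate complex Monge--Amp\`ere equation, and the only control one has is the explicit estimates of Theorem \ref{solution} and P\u{a}un's $C^{2}$-estimate, which give $g^{i,\overline{j}}=O(||\sigma_{D}||^{-2m/l})=O(r^{2(m/l)\,n(n-1)/\hat{S}_{D}})$ — polynomially \emph{growing} inverse-metric coefficients (and higher derivatives of $\varphi$ blowing up near $D\cap F$), not bounded ones. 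The paper's proof of the lemma is organized around absorbing exactly this growth: writing $g_{\phi}^{-1}-g^{-1}=g_{\phi}^{-1}(g-g_{\phi})g^{-1}$, it pays a factor $r^{6(m/l)n(n-1)/\hat{S}_{D}}$ for the three inverse metrics and uses the weight surplus, via $12/a(n)<\delta-8$ and $a(n)m/2l<\hat{S}_{D}/n(n-1)$, to get $4+3\cdot 2(m/l)n(n-1)/\hat{S}_{D}-(\delta-4)<0$. Your weight count ($2\delta-2\geq\delta$, needing only $\delta$ moderately large) never uses the smallness of $m/l$ at all, which is the telltale sign that the central difficulty has been assumed away; under your hypotheses the lemma would follow with much weaker assumptions on $\delta$, $n$ and $\hat{S}_{D}$ than the paper needs, and that is not an improvement but an unproved claim about uniform bounded geometry of $\overline{\omega}$.

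The same issue affects your positivity argument: deducing $\overline{\omega}+\dol\phi>0$ from a pointwise bound $\dol\phi\leq C'c_{0}\omega_{0}$ requires a lower bound $\overline{\omega}\geq C^{-1}\omega_{0}$ that is uniform in $c,v,\eta,\tau$ on the whole of $X\setminus D$, whereas the paper's remark after the lemma attributes positivity to the degenerate $C^{2}$-estimate: near $D\cap F$ the eigenvalues of $\overline{\omega}$ are only bounded below by a decaying power of $||\sigma_{D}||$, and one must check that the decay rate $\delta-2$ of $\dol\phi$ beats the rate $2(m/l)n(n-1)/\hat{S}_{D}$ of that degeneration — again using $\delta>8$ and the smallness of $m/l$. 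So the structure of your proof (Taylor expansion of the scalar curvature map plus weighted multiplication) is compatible with the paper's, but the uniform coefficient bounds you invoke must be replaced by the explicit degenerate Monge--Amp\`ere estimates, and the weight bookkeeping must be redone so that the hypotheses $\delta>8$, $12/a(n)<\delta-8$ and $a(n)m/2l<\hat{S}_{D}/n(n-1)$ actually enter.
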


\begin{proof}
For $\psi \in C^{4,\alpha}_{\delta - 4}$, the following inequality holds:\\

$||(r^{2} + 1)^{\delta/2} (g_{\phi}^{i,\overline{j}}g_{\phi}^{k,\overline{l}} - g^{i,\overline{j}}g^{k,\overline{l}}) \psi_{i,\overline{j},k,\overline{l}}||_{C^{0,\alpha}}$
\begin{eqnarray*}
&\leq& ||(r^{2} + 1)^{4/2} (g_{\phi}^{i,\overline{j}}g_{\phi}^{k,\overline{l}} - g^{i,\overline{j}}g^{k,\overline{l}})||_{C^{0,\alpha}} ||\psi||_{C^{4,\alpha}_{\delta - 4}(X\setminus D)}\\
&=& ||(r^{2} + 1)^{4/2} ( g_{\phi}^{i,\overline{j}}(g_{\phi}^{k,\overline{l}} - g^{k,\overline{l}}) + (g_{\phi}^{i,\overline{j}} - g^{i,\overline{j}})g^{k,\overline{l}})||_{C^{0,\alpha}} ||\psi||_{C^{4,\alpha}_{\delta - 4}(X\setminus D)}.\\
\end{eqnarray*}
In addition,  we have the following equation:
\begin{equation}
\label{inv}
g_{\phi}^{-1} - g^{-1} = g_{\phi}^{-1} ( g - g_{\phi} ) g^{-1}
\end{equation}
for $\phi \in C^{4,\alpha}_{\delta - 4}$ such that $\overline{\omega}_{\phi} = \overline{\omega} + \dol \phi$ is positive.\

It is enough to study the region where $M_{c,v,\eta} = t + \varphi + c$.\
The $C^{2}$-estimate of the degenerate complex {\ma} equation tells us that
\begin{equation}
\label{coefficient}
g^{i,\overline{j}} = O(||\sigma_{D}||^{-2m/l}) = O(r^{2m/l \times n(n-1)/\hat{S}_{D}}).
\end{equation}

Since we have already known the explicit $C^{2,\epsilon}$-estimate of the solution of the degenerate complex {\ma} equation from \cite{Aoi2}, we can estimate the $C^{0,\alpha}$-norm of coefficients $g^{i,\overline{j}},g_{\phi}^{i,\overline{j}}$.\
The hypothesis
$$
\frac{a(n)m}{2l} < \frac{\hat{S}_{D}}{n(n-1)}
$$
implies that $4 + 3 \times 2m/l \times n(n-1)/\hat{S}_{D} - (\delta - 4) < 8 + 12/a(n) - \delta < 0$.\
So, the equation (\ref{inv}) and the estimate (\ref{coefficient}) implies that the term
$$
||(r^{2} + 1)^{4/2} ( g_{\phi}^{i,\overline{j}}(g_{\phi}^{k,\overline{l}} - g^{k,\overline{l}}) + (g_{\phi}^{i,\overline{j}} - g^{i,\overline{j}})g^{k,\overline{l}})||_{C^{0,\alpha}}
$$
is estimated form above by $2c_{0}$.\
By taking a sufficiently small $c_{0}$, we can make the operator norm of $L_{\overline{\omega}_{\phi}} - L_{\overline{\omega}}$ small arbitrarily.\
Thus, we have the desired result.\
\end{proof}

\begin{rem}
The reason why we replace the hypothesis for weights of Banach spaces in the above lemma comes from the $C^{2}$-estimate of the solution of the degenerate complex {\ma} equation due to P$\check{{\rm a}}$un \cite{Pa}.\
From this, the positivity of $\overline{\omega}_{\phi}$ holds.\
On the other hand, we need to assume that $\delta-4 > 4$ to control the factor $(r^{2}+1)^{4/2}$.\
So, the choice of a weight $\delta$ implies that we need to assume that the dimension $n$ is greater than $4$ and $\hat{S}_{D}/n(n-1)$ is smaller than $1/3$.\
In addition, since we need to choose $\epsilon > \delta + 2$ in the proof of Theorem \ref{asc and eq}, we need to assume that $n>5$.\
\end{rem}

Constants $\hat{K}$ and $c_{0}$ which appear in Theorem \ref{small small} and Lemma \ref{contraction lemma 2} respectively, are uniform for parameters $c,v,\tau$.\
Therefore, Theorem \ref{asc and eq} implies that Condition \ref{condition B} holds with respect to $\overline{\omega}$.\

\begin{thm}
\label{small small small}
For the complete {\kah} metric $\overline{\omega}$ above, the inequality
$$
|| S(\overline{\omega}) ||_{C^{0,\alpha}_{\delta}} \leq c_{0} \hat{K} /2
$$
holds by taking suitable parameters $v,c,\tau$.\
\end{thm}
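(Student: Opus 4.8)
The strategy is simply to combine the uniformity of the constants $\hat{K}$ and $c_{0}$ with the smallness of $S(\overline{\omega})$ already established in Theorem \ref{asc and eq}. \emph{First} I would pin down the order of quantifiers. By Theorem \ref{key theorem} there is a constant $K>0$, uniform over the admissible parameters $(c,v,\tau)$ with $v^{1/\beta}c=k\log c$ and $\tau^{2}=v$, bounding $\mathcal{D}_{\overline{\omega}}^{*}\mathcal{D}_{\overline{\omega}}$ from below; feeding this into Theorem \ref{small small} gives $\hat{K}>0$ (for instance $\hat{K}=K/2$, valid once the remaining term $(\nabla^{1,0}\ast,\nabla^{0,1}S(\overline{\omega}))_{\overline{\omega}}=L_{\overline{\omega}}+\mathcal{D}_{\overline{\omega}}^{*}\mathcal{D}_{\overline{\omega}}$ has operator norm below $K/2$), again uniform in the parameters. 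By Lemma \ref{contraction lemma 2} the constant $c_{0}>0$ is independent of $\overline{\omega}$, hence of $(c,v,\tau)$. Therefore $c_{0}\hat{K}/2$ is a fixed positive number, determined before any parameter is selected.

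\emph{Second}, I would invoke the second bullet of Theorem \ref{asc and eq}. The weight $\delta$ fixed in $(\ref{weight})$ satisfies $8<\delta<\min\{2n,\,2+2n(n-1)/\hat{S}_{D}\}$, so in particular $\delta$ lies in the interval $(4,\min\{2n,\,2+2n(n-1)/\hat{S}_{D}\})$ to which Theorem \ref{asc and eq} applies, and $n\geq 6\geq 4$; since $\theta_{D}$ is cscK and $0<\hat{S}_{D}<n(n-1)$ by hypothesis, Theorem \ref{asc and eq} asserts that $\|S(\overline{\omega})\|_{C^{0,\alpha}_{\delta}}$ can be made arbitrarily small. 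Quantitatively, tracing its proof with $\beta>\delta$, $v^{1/\beta}c=k\log c$, $\tau^{2}=v$ and $\epsilon>\delta+2$, one has $\|S(\overline{\omega})\|_{C^{0,\alpha}_{\delta}}=O(c^{(\delta-\epsilon)/2+1}(\log c)^{-1})+O(c^{-\beta/2}(k\log c)^{\beta/2})$, which tends to $0$ as $c\to\infty$. So I would fix $k$ and $\beta$ as in Theorem \ref{asc and eq} and then take $c$ sufficiently large (equivalently $v,\tau$ sufficiently small) that $\|S(\overline{\omega})\|_{C^{0,\alpha}_{\delta}}\leq c_{0}\hat{K}/2$; this is precisely the assertion of the theorem, i.e. Condition \ref{condition B} holds with respect to $\overline{\omega}$.

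The genuine point of the argument --- and the only place where something could go wrong --- is the \emph{independence} of $\hat{K}$ and $c_{0}$ from the parameters: if $\hat{K}$ degenerated to $0$ as $c\to\infty$ faster than $\|S(\overline{\omega})\|_{C^{0,\alpha}_{\delta}}$, the inequality would be unreachable. This is exactly why Theorem \ref{key theorem} was proved via the resonance theorem, yielding a uniform lower bound for $\mathcal{D}_{\overline{\omega}}^{*}\mathcal{D}_{\overline{\omega}}$ along the sequence $(\tau,v,c)\to(0,0,\infty)$, and why Lemma \ref{contraction lemma 2} was phrased with $c_{0}$ ``independent of $\overline{\omega}$''; in the write-up I would cite those uniformities explicitly and check once that the single weight $\delta$ from $(\ref{weight})$ is simultaneously admissible for Theorem \ref{asc and eq}, for the isomorphism statement of Theorem \ref{small small}, and for Lemma \ref{contraction lemma 2} (the last requiring $\delta-4>4$ and $3\hat{S}_{D}<n(n-1)$), all of which hold by $(\ref{weight})$ together with the standing hypotheses $n\geq 6$ and $0<3\hat{S}_{D}<n(n-1)$. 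Given these, the conclusion is immediate.
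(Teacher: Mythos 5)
Your proposal is correct and follows exactly the paper's own (very brief) argument: the paper likewise deduces the inequality by noting that $\hat{K}$ and $c_{0}$ from Theorem \ref{small small} and Lemma \ref{contraction lemma 2} are uniform in the parameters $c,v,\tau$, and then invoking Theorem \ref{asc and eq} to make $\| S(\overline{\omega})\|_{C^{0,\alpha}_{\delta}}$ arbitrarily small for the fixed admissible weight $\delta$. Your additional quantitative tracing of the decay in $c$ and the explicit check that $\delta$ from (\ref{weight}) is simultaneously admissible are consistent elaborations, not a different route.
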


\subsection{The fixed point theorem}

Finally, we show that the existence of a complete scalar-flat {\kah} metric on $X\setminus D$.\
Following Arezzo-Pacard \cite{AP1}, \cite{AP2}, for the expansion of the scalar curvature
$$
S(\overline{\omega} + \dol \phi) = S(\overline{\omega}) + L_{\overline{\omega}} (\phi) + Q_{\overline{\omega}} (\phi),
$$
we consider the following operator
$$
\mathcal{N}(\phi) : = -L_{\overline{\omega}}^{-1}(S(\overline{\omega}) + Q_{\overline{\omega}}(\phi)) \in C^{4,\alpha}_{\delta - 4}
$$
for $\phi \in C^{4,\alpha}_{\delta - 4}$ by following Arezzo-Pacard \cite{AP1}, \cite{AP2} (see also \cite{Sz1}).\
Lemma \ref{contraction lemma 2} implies that $\mathcal{N}$ is the contraction map on the neighborhood of the origin of $C^{4,\alpha}_{\delta - 4}$ for a suitable weight $\delta$.\
In \cite{Aoi1}, we assume that Condition \ref{condition A} and Condition \ref{condition B} hold.\
Namely, we assume that there exists a complete {\kah} metric $\omega_{0}$ whose scalar curvature is sufficiently small so that the operator $L_{\omega_{0}}$ has the uniformly bounded inverse.\
As we have seen, by constructing the {\kah} metric $\overline{\omega}$, Theorem \ref{small small} and Theorem \ref{small small small} imply that we don't have to assume that Condition \ref{condition A} and Condition \ref{condition B} hold.\
The following Proposition implies the existence of a complete scalar-flat {\kah} metric.\

\begin{prop}
\label{key proposition 3}
Set
$$
\mathcal{U} := \left\{ \phi \in C^{4,\alpha}_{\delta - 4} : ||\phi||_{C^{4,\alpha}_{\delta -4}} \leq c_{0} \right\}.
$$
If the assumption in Theorem \ref{complete scalar-flat kah} holds, the operator $\mathcal{N}$ is a contraction on $\mathcal{U}$ and $\mathcal{N}(\mathcal{U}) \subset \mathcal{U}$ by taking suitable parameters $c,v,\tau$.\
\end{prop}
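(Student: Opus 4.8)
The plan is to verify the two hypotheses of the Banach fixed point theorem for $\mathcal{N}$ on the closed ball $\mathcal{U}$, feeding in the three uniform estimates already established: that $L_{\overline{\omega}}\colon C^{4,\alpha}_{\delta-4}\to C^{0,\alpha}_{\delta}$ is an isomorphism with $\|L_{\overline{\omega}}^{-1}\|\le 1/\hat{K}$ (Theorem \ref{small small}, which is Condition \ref{condition A} for $\overline{\omega}$ via Theorem \ref{key theorem} and Theorem \ref{asc and eq}); that $\|S(\overline{\omega})\|_{C^{0,\alpha}_{\delta}}\le c_{0}\hat{K}/2$ (Theorem \ref{small small small}, i.e.\ Condition \ref{condition B} for $\overline{\omega}$); and the Lipschitz bound on the family $\phi\mapsto L_{\overline{\omega}_{\phi}}$ from Lemma \ref{contraction lemma 2}. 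First I would fix the parameters $c,v,\tau$ (with $v^{1/\beta}c=k\log c$, $\tau^{2}=v$, $\beta>\delta$) large/small enough that all three statements apply to $\overline{\omega}$; the crucial point is that $\hat{K}$ and $c_{0}$ do not depend on $c,v,\tau$, so this is possible by Theorem \ref{asc and eq}.

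Next I would record the quadratic nature of the remainder $Q_{\overline{\omega}}$. For $\phi_{1},\phi_{2}\in\mathcal{U}$, set $\phi_{t}=t\phi_{1}+(1-t)\phi_{2}$, so that $\|\phi_{t}\|_{C^{4,\alpha}_{\delta-4}}\le c_{0}$ for all $t\in[0,1]$. Differentiating the scalar curvature along the segment of metrics $\overline{\omega}+\dol\phi_{t}$ and using $S(\overline{\omega}+\dol\phi)=S(\overline{\omega})+L_{\overline{\omega}}(\phi)+Q_{\overline{\omega}}(\phi)$ gives
$$
Q_{\overline{\omega}}(\phi_{1})-Q_{\overline{\omega}}(\phi_{2})=\int_{0}^{1}\big(L_{\overline{\omega}+\dol\phi_{t}}-L_{\overline{\omega}}\big)(\phi_{1}-\phi_{2})\,dt .
$$
Lemma \ref{contraction lemma 2} bounds each integrand in $C^{0,\alpha}_{\delta}$ by $\tfrac{\hat{K}}{2}\|\phi_{1}-\phi_{2}\|_{C^{4,\alpha}_{\delta-4}}$, hence
$$
\|Q_{\overline{\omega}}(\phi_{1})-Q_{\overline{\omega}}(\phi_{2})\|_{C^{0,\alpha}_{\delta}}\le \tfrac{\hat{K}}{2}\|\phi_{1}-\phi_{2}\|_{C^{4,\alpha}_{\delta-4}} ,
$$
and, taking $\phi_{2}=0$ and using $Q_{\overline{\omega}}(0)=0$, also $\|Q_{\overline{\omega}}(\phi)\|_{C^{0,\alpha}_{\delta}}\le\tfrac{\hat{K}}{2}\|\phi\|_{C^{4,\alpha}_{\delta-4}}\le\tfrac{c_{0}\hat{K}}{2}$ for $\phi\in\mathcal{U}$.

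With these in hand, both conclusions are immediate. For $\phi\in\mathcal{U}$,
$$
\|\mathcal{N}(\phi)\|_{C^{4,\alpha}_{\delta-4}}\le\|L_{\overline{\omega}}^{-1}\|\Big(\|S(\overline{\omega})\|_{C^{0,\alpha}_{\delta}}+\|Q_{\overline{\omega}}(\phi)\|_{C^{0,\alpha}_{\delta}}\Big)\le\frac{1}{\hat{K}}\Big(\frac{c_{0}\hat{K}}{2}+\frac{c_{0}\hat{K}}{2}\Big)=c_{0},
$$
so $\mathcal{N}(\mathcal{U})\subset\mathcal{U}$, and the last assertion of Lemma \ref{contraction lemma 2} guarantees $\overline{\omega}+\dol\phi>0$ on $\mathcal{U}$. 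Since $\mathcal{N}(\phi_{1})-\mathcal{N}(\phi_{2})=-L_{\overline{\omega}}^{-1}\big(Q_{\overline{\omega}}(\phi_{1})-Q_{\overline{\omega}}(\phi_{2})\big)$, the displayed Lipschitz bound gives $\|\mathcal{N}(\phi_{1})-\mathcal{N}(\phi_{2})\|_{C^{4,\alpha}_{\delta-4}}\le\tfrac12\|\phi_{1}-\phi_{2}\|_{C^{4,\alpha}_{\delta-4}}$, so $\mathcal{N}$ is a $\tfrac12$-contraction of $\mathcal{U}$. The contraction mapping principle then produces a unique fixed point $\phi\in\mathcal{U}$, and $\mathcal{N}(\phi)=\phi$ unwinds to $S(\overline{\omega}+\dol\phi)=0$ with $\overline{\omega}+\dol\phi>0$ and $\phi\in C^{4,\alpha}_{\delta-4}$, which is Theorem \ref{complete scalar-flat kah}.

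The genuinely delicate input is not this soft fixed-point step but the uniformity (in $c,v,\tau$) of $\hat{K}$ and $c_{0}$: $\hat{K}$ comes from the contradiction/resonance argument of Theorem \ref{key theorem} together with the weighted-norm smallness of $S(\overline{\omega})$ from Theorem \ref{asc and eq}, and $c_{0}$ comes from Lemma \ref{contraction lemma 2}, whose proof rests on the explicit $C^{2}$-estimates for the degenerate complex Monge--Amp\`ere equation from \cite{Aoi2} and \cite{Pa}. It is precisely the task of controlling the factor $(r^{2}+1)^{4/2}$ against those estimates, combined with the requirement $\epsilon>\delta+2$ from the proof of Theorem \ref{asc and eq}, that forces the hypotheses $n\ge 6$, $3\hat{S}_{D}<n(n-1)$, and $8<\delta<\min\{2n,\,2+2n(n-1)/\hat{S}_{D}\}$ appearing in the statement.
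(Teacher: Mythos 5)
Your proposal is correct and follows essentially the same route as the paper: both verify $\mathcal{N}(\mathcal{U})\subset\mathcal{U}$ and the contraction property from the uniform bound $\|L_{\overline{\omega}}^{-1}\|\le\hat{K}^{-1}$ (Theorem \ref{small small}), the smallness $\|S(\overline{\omega})\|_{C^{0,\alpha}_{\delta}}\le c_{0}\hat{K}/2$ (Theorem \ref{small small small}), and the operator bound of Lemma \ref{contraction lemma 2} applied to $Q_{\overline{\omega}}$ written as a difference of linearizations along the segment of potentials. Your only deviation is cosmetic: you use the integral representation $Q_{\overline{\omega}}(\phi_{1})-Q_{\overline{\omega}}(\phi_{2})=\int_{0}^{1}(L_{\overline{\omega}+\dol\phi_{t}}-L_{\overline{\omega}})(\phi_{1}-\phi_{2})\,dt$ and spell out the Lipschitz estimate for arbitrary $\phi_{1},\phi_{2}$, where the paper states the same bound in mean-value form with some $s\in[0,1]$ and only displays the case $\phi_{2}=0$.
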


\begin{proof}
Immediately, we have
$$
||\mathcal{N}(\phi)||_{C^{4,\alpha}_{\delta - 4}} \leq ||\mathcal{N}(\phi) - \mathcal{N}(0) ||_{C^{4,\alpha}_{\delta - 4}}  + || \mathcal{N}(0) ||_{C^{4,\alpha}_{\delta - 4}}.
$$
From Lemma \ref{contraction lemma 2} and the condition $||\phi||_{C^{4,\alpha}_{\delta -4}} \leq c_{0}$, the same argument in the proof in \cite{Aoi1} implies that we obtain the following estimate ;
\begin{eqnarray*}
||\mathcal{N}(\phi) - \mathcal{N}(0) ||_{C^{4,\alpha}_{\delta - 4}}
&\leq& || - L_{\overline{\omega}}^{-1} (Q_{\overline{\omega}}(\phi)) ||_{C^{4,\alpha}_{\delta - 4}}\\
&\leq& \hat{K}^{-1} || L_{\overline{\omega} + s \dol \phi} - L_{\overline{\omega}} ||_{C^{4,\alpha}_{\delta - 4} \to C^{0,\alpha}_{\delta}}  ||\phi||_{C^{4,\alpha}_{\delta -4}}
\end{eqnarray*}
for some $s \in [0,1]$.\
Lemma \ref{contraction lemma 2} implies that we have
$$
||\mathcal{N}(\phi) - \mathcal{N}(0) ||_{C^{4,\alpha}_{\delta - 4}}\leq \frac{1}{2} c_{0}
$$
Theorem \ref{small small} and Theorem \ref{small small small} implies that we have
\begin{equation*}
|| \mathcal{N}(0) ||_{C^{4,\alpha}_{\delta - 4}} = ||L_{\overline{\omega}}^{-1} (S(\overline{\omega}))||_{C^{4,\alpha}_{\delta - 4}} \leq \hat{K}^{-1} || S(\overline{\omega}) ||_{C^{0,\alpha}_{\delta}} \leq \frac{1}{2} c_{0}.
\end{equation*}
Thus, $\mathcal{N}(\phi) \in \mathcal{U}$.\
\end{proof}

{
\it Proof of Theorem \ref{complete scalar-flat kah}.}
From the discussion above, there exists a unique $\phi_{\infty} := \lim_{i \to \infty} \mathcal{N}^{i}(\phi)$ for any $\phi \in \mathcal{U} \subset C^{4,\alpha}_{\delta - 4}$ satisfying $\phi_{\infty} = \mathcal{N}(\phi_{\infty})$ under the hypothesis in Theorem \ref{complete scalar-flat kah}.\
Therefore, $\overline{\omega} + \dol \phi_{\infty}$ is a complete scalar-flat {\kah} metric on $X \setminus D$.\

\sq

\bigskip
\address{
(CURRENT ADDRESS)\\
Osaka Prefectural Abuno High School,\\
3-38-1, Himuro-chou, Takatsuki-shi,\\
Osaka, 569-1141\\
Japan
}
{takahiro.aoi.math@gmail.com}

\bigskip
\address{
(OLD ADDRESS)\\
Department of Mathematics\\
Graduate School of Science\\
Osaka University\\
Toyonaka 560-0043\\
Japan
}
{t-aoi@cr.math.sci.osaka-u.ac.jp}

\end{document}